\newcommand{\qqq}{\mathbb{Q}}
\newcommand{\ccc}{\mathbb{C}}
\newcommand{\xxx}{X^{(2)}}
\newcommand{\wer}{{(2)}}
\newcommand{\xij}{x_{ij}}
\newcommand{\cxij}{\mathbb{Q}[x_{ij}]}
\newcommand{\cij}{\mathbb{C}_{(ij)}}
\newtheorem{thm}{Theorem}[section]
\newtheorem{prop}[thm]{Proposition}
\newtheorem{lem}[thm]{Lemma}
\newtheorem*{thm:main}{Theorem \ref{mainintro}}
\theoremstyle{definition}
\newtheorem{defn}[thm]{Definition}
\newtheorem*{notn}{Notation}
\newtheorem{remark}[thm]{Remark}
\theoremstyle{remark}
\title[Geometry of the intersection ring]{Geometry of the intersection ring and vanishing relations in the cohomology of the moduli space of parabolic bundles on a curve}
\author{Elisheva Adina Gamse and Jonathan Weitsman }
\address{Department of Mathematics, Northeastern University}
\thanks{Partially supported by NSF grant DMS 12--11819}
\date{\today}
\begin{document}

\begin{abstract}
We study the ring generated by the Chern classes of tautological line bundles on the moduli space of parabolic bundles of arbitrary rank on a Riemann surface. We show the Poincar\'e duals to these Chern classes have simple geometric representatives. We use this construction to show that the ring generated by these Chern classes vanishes below the dimension of the moduli space, in analogy with the Newstead-Ramanan conjecture for stable bundles.
\end{abstract}
\maketitle
\section{Introduction}

Let $G$ be a compact Lie group with maximal torus $T$, let $\Sigma^g$ be a compact, connected, oriented $2$-manifold of genus $g$, and let $p \in \Sigma^g$. For $t \in T$, let 
\[
R_g(t) = \{(A_1, \ldots, A_g, B_1, \ldots, B_g) \in G^{2g} \vert \prod_{i=1}^g [A_i, B_i] \sim t \}
\]
(where $\sim$ denotes conjugacy in $G$). The fundamental group $\pi_1(\Sigma^g \setminus \{p\})$ can be presented by generators $a_1, \ldots, a_g, b_1, \ldots, b_g, c$ with the relation $\prod_{i=1}^g [a_i, b_i]=c$, where $c$ can be thought of as representing the boundary curve of a small disc containing $p$; we choose such a set of generators. Then
\begin{equation*}
R_g(t) = \{ \rho \in Hom(\pi_1(\Sigma^g \setminus \{p\}),SU(N)) \vert \rho(c) \sim t\},
\end{equation*}
$G$ acts on $R_g(t)$ by conjugation, and $S_g(t)=R_g(t)/G$ is the space of characters of the fundamental group of $\Sigma^g \setminus \{p\}$ in $G$ where the conjugacy class of the image of $c$ is fixed.

When $t=e$, we get the moduli space $\bar{S_g} = \text{Hom}(\pi_1(\Sigma^g),G)/G$ of flat connections on $\Sigma^g$. 

Take $t=\xi \in Z(G)$ to be in the centre of $G$. In particular, take $G=SU(N)$ and $\xi = e^{2\pi i k / N}I \in Z(G)$, where $(k,N)=1$. Then the space 
\[
R_g(\xi) = \{(A_1, \ldots,A_g, B_1, \ldots, B_g) \in G^{2g} \vert \prod_{i=1}^g [A_i, B_i] = \xi\}
\]
is the moduli space of flat connections on a principal $G$-bundle over $\Sigma^g$. If we equip $\Sigma^g$ with a conformal structure, then $S_g(\xi)$ acquires a K\"ahler structure as a moduli space of stable rank $N$ vector bundles, of degree $k$ and fixed determinant, over the corresponding Riemann surface. In this case, the following generalisations of the Newstead-Ramanan conjecture have been established: 

\begin{thm}[\cite{tw}] The Chern classes of $S_g(\xi)$ vanish above degree $N(N-1)(g-1)$. 
\end{thm}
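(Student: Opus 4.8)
The plan is to reduce the assertion to a statement about the universal bundle and then to a dimension count realized through explicit geometric cycles. Since $(k,N)=1$, the moduli space $S_g(\xi)$ carries a universal adjoint bundle $\operatorname{ad}\mathcal E \to \Sigma^g \times S_g(\xi)$, and deformation theory identifies the tangent bundle with the vector bundle $R^1\pi_*(\operatorname{ad}\mathcal E)$, where $\pi$ is projection to $S_g(\xi)$; this is a genuine bundle, of rank $(N^2-1)(g-1)$, because $H^0(\Sigma,\operatorname{ad}E)=0$ for stable $E$, and equivalently $\Omega^1_{S_g(\xi)}\cong \pi_*(\operatorname{ad}\mathcal E\otimes K_\Sigma)$ by relative Serre duality. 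Thus the Chern classes of the variety $S_g(\xi)$ are the Chern classes of this pushforward, and Grothendieck--Riemann--Roch expresses $\operatorname{ch}(TS_g(\xi))$, and hence each $c_i(S_g(\xi))$, as a universal polynomial in the K\"unneth components of $\operatorname{ch}(\mathcal E)$ over $\Sigma^g\times S_g(\xi)$: the classes $a_r$ obtained by restriction to a point of $\Sigma^g$, the odd classes $\psi_r^j$ paired with $H_1(\Sigma^g)$, and the classes $f_r$ obtained by integration over $\Sigma^g$. Carrying out the fibre integration, and generalizing Newstead's $N=2$ computation, one checks that the $\psi_r^j$ enter these polynomials only through the symplectically invariant quadratic combinations $\gamma_{rs}=\sum_{j=1}^{g}(\psi_r^j\psi_s^{j+g}-\psi_r^{j+g}\psi_s^j)$ --- this is forced by the $Sp(2g,\mathbb Z)$-equivariance of the construction together with the first fundamental theorem of invariant theory --- so that $c(S_g(\xi))$ lies in the subring $R\subseteq H^*(S_g(\xi))$ generated by $\{a_r\}$, $\{f_r\}$, and $\{\gamma_{rs}\}$.

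The heart of the matter --- and the construction that this paper develops systematically in the parabolic setting --- is that each generator of $R$ admits a Poincar\'e dual representative given by a natural degeneracy locus of the universal bundle: fixing auxiliary points and pairwise disjoint simple closed curves on $\Sigma^g$, one takes the locus of bundles $E$ for which $E$, or an associated bundle of twisted sections, is non-generic along a chosen curve or at a chosen point. These are effective cycles of the expected codimension, and, crucially, representatives attached to disjoint curves can be arranged to meet properly, so that a monomial in the generators of $R$ is represented by an intersection cut out by simultaneous holonomy conditions along a disjoint family of curves on $\Sigma^g$. I would then estimate the codimension past which such an intersection is forced to be empty: using a decomposition of $\Sigma^g$ into $2g-2$ trinions, the holonomies along the cutting curves take values in $SU(N)$, and for the defining conditions to be simultaneously consistent the holonomy data is confined to a subset whose codimension cannot exceed $N(N-1)(g-1)$. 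Granting this bound, every monomial in the generators of $R$ of cohomological degree exceeding $N(N-1)(g-1)$ is represented by the empty set and therefore vanishes; in particular $c_i(S_g(\xi))=0$ whenever $2i>N(N-1)(g-1)$.

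The main obstacle is precisely this last step: constructing the geometric representatives with enough transversality to control their intersections, and proving the sharp bound on the codimension of a consistent family of holonomy conditions. The shape of the answer is suggestive --- $N(N-1)(g-1)=(g-1)\dim_{\mathbb R}(SU(N)/T)$ --- and I would expect the count to emerge from the fact that, per handle of $\Sigma^g$, the $\dim T=N-1$ Cartan directions contribute a trivialized piece carrying no Chern classes, so that only $\dim(SU(N)/T)=N(N-1)$ degrees per handle can support a nonzero degeneracy class. Everything preceding the count --- the identification of $TS_g(\xi)$, the Grothendieck--Riemann--Roch computation, and the reduction of $c(S_g(\xi))$ to the subring $R$ --- is essentially bookkeeping, but the fact that the Chern classes lie in $R$, with no bare odd classes, is what makes the geometric reduction possible, since $a_r$, $f_r$ and $\gamma_{rs}$ are exactly the classes with transparent cycle representatives.
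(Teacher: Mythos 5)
This theorem is quoted in the paper from \cite{tw} without proof, so there is no internal argument to compare against; judging your proposal on its own terms, it has a genuine gap, and in fact its central reduction is false as stated. You propose to show that \emph{every} monomial in the subring $R$ generated by $\{a_r\}$, $\{f_r\}$, $\{\gamma_{rs}\}$ of degree exceeding $N(N-1)(g-1)$ is represented by an empty intersection and hence vanishes. That cannot be right: $f_2$ is (a multiple of) the K\"ahler class of $S_g(\xi)$, and $S_g(\xi)$ is a compact K\"ahler manifold of complex dimension $(N^2-1)(g-1)>N(N-1)(g-1)$, so powers of $f_2$ are nonzero well beyond your proposed cutoff. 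Even restricting to the classes $a_r$ alone, the sharp vanishing threshold for the ring they generate is $2N(N-1)(g-1)$ (the Earl--Kirwan theorem quoted as Theorem 1.2 in this paper), not $N(N-1)(g-1)$. The vanishing of the Chern classes of $S_g(\xi)$ above the stated degree is a property of the particular polynomials in the generators that occur as $c_i(TS_g(\xi))$, not of all monomials in those generators, so no argument of the form ``each generator has a geometric representative, hence all long monomials die'' can succeed.

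Beyond this, the two steps you yourself flag as the heart of the matter are exactly what is missing: you assert, without construction or transversality statement, that the K\"unneth components of $\mathrm{ch}(\mathcal E)$ admit effective degeneracy-locus representatives of the expected codimension that can be intersected properly, and you assert an unproved bound (``the holonomy data is confined to a subset whose codimension cannot exceed $N(N-1)(g-1)$'') from which everything follows. There is also a bookkeeping slip: the theorem concerns vanishing of $c_i$ for $i$ above $N(N-1)(g-1)$ (Chern-class degree), whereas you conclude vanishing when $2i>N(N-1)(g-1)$. Note finally that the technique of the present paper (nowhere-vanishing sections of the line bundles $L_{ij}$ built from matrix entries of holonomies, as in Lemma 4.2 and Lemma 4.3) lives on the parabolic moduli space $S_g(t)$ for generic $t\in T$ and produces vanishing for the ring generated by the $c_1(L_{ij})$ there; it does not by itself yield the tangent-bundle statement for $S_g(\xi)$, which is why that result is cited rather than proved here.
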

\begin{thm}[\cite{ek}] \label{genthis} The ring generated by the Chern classes of the vector bundle associated to $R_g(\xi) \to S_g(\xi)$ via the standard representation of $SU(N)$ on $\ccc^N$ vanishes in dimension strictly greater than $2N(N-1)(g-1)$. 
\end{thm}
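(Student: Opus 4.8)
The plan is to pass to the gauge-theoretic model of Atiyah--Bott and Kirwan and to reduce the statement to a sharp degree estimate for the relations in $H^*(S_g(\xi))$. Equip $\Sigma^g$ with a conformal structure, so that $\mathcal{N}:=S_g(\xi)$ is the smooth projective moduli space of stable rank $N$ bundles of degree $k$ with fixed determinant (recall $(k,N)=1$). Let $\pi\colon\mathcal{N}\times\Sigma^g\to\mathcal{N}$ be the projection and let $\mathbb{E}$ be a universal bundle on $\mathcal{N}\times\Sigma^g$ (canonical rationally, by the usual argument with the centre of $SU(N)$). Since the structure group is $SU(N)$, the determinant of $\mathbb{E}|_{\mathcal{N}\times\{p\}}$ is canonically trivial, so the bundle of the statement has rational Chern classes $a_r:=c_r\big(\mathbb{E}|_{\mathcal{N}\times\{p\}}\big)$ for $2\le r\le N$ — the ``point classes'' of Atiyah--Bott — and the ring in question is the subring $A\subset H^*(\mathcal{N};\qqq)$ they generate, equivalently the image of $H^*(BSU(N))=\qqq[c_2,\ldots,c_N]$ under the classifying map. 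By the Atiyah--Bott generation theorem together with Kirwan surjectivity one has $H^*(\mathcal{N};\qqq)=\qqq[a_r,b_r^j,f_r]/\mathcal{I}$, where the $b_r^j\in H^{2r-1}$ ($1\le j\le 2g$) and $f_r\in H^{2r-2}$ are the remaining K\"unneth components of $c_r(\mathbb{E})$ and $\mathcal{I}$ is the ideal of Mumford relations — the push-forwards of the equivariant Euler classes of the normal bundles to the Harder--Narasimhan strata of the stack of holomorphic structures. Under this presentation $A\cong\qqq[a_2,\ldots,a_N]/(\mathcal{I}\cap\qqq[a_2,\ldots,a_N])$, so the theorem amounts to the assertion that every monomial in $a_2,\ldots,a_N$ of cohomological degree exceeding $2N(N-1)(g-1)$ lies in $\mathcal{I}$.

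There are two natural routes. The first uses a presentation of $\mathcal{I}$ by explicit generators (obtained from Kirwan's stratification method, made explicit in low rank by Zagier and King--Newstead and in general by Earl--Kirwan) and proceeds by induction on $N$ along the recursive structure of the Harder--Narasimhan stratification: the relation attached to splitting off a destabilising subbundle rewrites a high-degree characteristic monomial, modulo terms of strictly lower degree and modulo classes pulled back from products of lower-rank moduli spaces, in a form to which the inductive hypothesis applies once one tracks the attendant degree and level shifts; the base cases are $N=1$ (trivial) and $N=2$ (the known structure of the rank-two ring, $\beta^g=0$). The second, and the one I would actually pursue, bypasses ideal membership altogether: by Poincar\'e duality on the smooth projective variety $\mathcal{N}$, a monomial $P(a)$ vanishes if and only if $\int_{\mathcal{N}}P(a)\,Q(a,b,f)=0$ for every monomial $Q$ in the generators, and the residue formula of Witten and Jeffrey--Kirwan expresses each such pairing as an iterated residue, over the complexified maximal torus of $SU(N)$, of a meromorphic form whose pole divisor is supported on the $N(N-1)/2$ root hyperplanes, each occurring to order $2g-2$, so that its total denominator degree is $2N(N-1)(g-1)$; since $P$ contributes only to the numerator, a count of degrees forces the iterated residue to vanish identically once $\deg P>2N(N-1)(g-1)$, uniformly in $Q$, and hence $A=0$ in those degrees.

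The main obstacle, on either route, is extracting the sharp constant $2N(N-1)(g-1)$ rather than some cruder bound. In the relations approach this is the bookkeeping that makes the induction close: one must control precisely how the point classes enter the Mumford relations and the pulled-back classes on the lower-rank factors, and verify that these relations annihilate every characteristic monomial of degree $>2N(N-1)(g-1)$ — this is exactly where the constant is pinned down, and the codimension of the deepest unstable stratum is the conceptual reason such a bound must exist. In the residue approach the delicate point is to set up the Witten--Jeffrey--Kirwan formula in the required generality and then to verify that the relevant iterated residue genuinely carries total pole order $N(N-1)(g-1)$ over $\ccc$, with no lower-order contribution surviving the sum over the Weyl group. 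As a consistency check, $T\mathcal{N}=R^1\pi_*\,\mathrm{End}_0\,\mathbb{E}$, so Grothendieck--Riemann--Roch builds the Chern classes of $\mathcal{N}$ out of the K\"unneth components of $c_\bullet(\mathbb{E})$ — with the point classes alone contributing the summand $(g-1)\,\mathrm{ch}\big(\mathrm{End}_0(\mathbb{E}|_{\mathcal{N}\times\{p\}})\big)$ — so the present statement is a natural refinement of the vanishing of the Chern classes of $\mathcal{N}$ above degree $N(N-1)(g-1)$ established in \cite{tw}.
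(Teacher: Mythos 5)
This statement is not proved in the paper at all: Theorem \ref{genthis} is quoted from Earl--Kirwan \cite{ek}, and the techniques the paper actually develops (explicit sections of the tautological line bundles $L_{ij}$ on the parabolic space $S_g(t)$, Lemma \ref{nowherezero}, and the combinatorial Proposition \ref{induction}) are aimed at Theorem \ref{mainintro} and say nothing about the fixed-determinant space $S_g(\xi)$. So your proposal can only be judged on its own terms, and on those terms it is a research plan rather than a proof. Your set-up is correct, and the two routes you sketch (completeness of the Mumford relations with an induction over the Harder--Narasimhan strata, or the Witten/Jeffrey--Kirwan residue formula) are indeed the ones relevant to how this theorem was established in the literature; but the decisive step on either route is precisely the one you defer as the ``main obstacle,'' namely extracting the sharp bound $2N(N-1)(g-1)$, so the central argument is missing.

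Moreover, the one concrete mechanism you do offer --- the degree count in the residue route --- does not work as stated. The integrand in the residue formula is not homogeneous: the K\"ahler class and the $f_r$ enter through exponentials, the $b_r^j$ pair off and contribute their own factors, and there are Weyl-group and lattice sums before the iterated residue is taken. Monomials in the full set of generators of total degree equal to $\dim_{\mathbb{R}} S_g(\xi) = 2(N^2-1)(g-1)$, which strictly exceeds $2N(N-1)(g-1)$, can and do have nonzero integrals; hence no argument that sees only the total degree of the numerator against the fixed pole order can force the vanishing ``uniformly in $Q$.'' One must use that the point classes $a_r$ enter the integrand purely as homogeneous polynomials in the torus variable, separate their contribution from that of $Q$, and then actually carry out the coefficient extraction in the presence of all the other insertions --- and that is the substance of the Earl--Kirwan proof, not a bookkeeping afterthought. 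It is also worth noting that the elementary vanishing mechanism this paper introduces (common zeros of explicit sections of the $L_{ij}$) is of a genuinely different nature and does not apply to the ring generated by the $a_r$ on $S_g(\xi)$, so nothing in the present paper can be used to fill that gap.
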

Let $a_2, \ldots, a_r$ be the Chern classes of the vector bundle associated to $R_g(\xi)$ via the standard representation of $SU(N)$ on $\ccc^N$. Earl shows in \cite{earl} that the Pontryagin ring of $S_g(\xi)$ is contained in the subring of $H^*(S_g(\xi))$ generated by the $a_i$, and so Theorem \ref{genthis} implies 
\begin{thm}[\cite{ek}]
The Pontryagin ring of $S_g(\xi)$ vanishes in dimension strictly greater than $2N(N-1)(g-1)$.
\end{thm}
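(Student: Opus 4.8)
The plan is to deduce this statement formally from Theorem \ref{genthis} together with the comparison result of Earl recalled just above. By definition the Pontryagin ring of $S_g(\xi)$ is the subring of $H^*(S_g(\xi);\qqq)$ generated by the Pontryagin classes of the tangent bundle of $S_g(\xi)$. Earl's theorem in \cite{earl} asserts that this subring is contained in the subring $A \subseteq H^*(S_g(\xi);\qqq)$ generated by the classes $a_2,\ldots,a_r$; the underlying mechanism is that the tangent bundle of $S_g(\xi)$, and hence each of its Pontryagin classes, can be expressed rationally in terms of the vector bundle associated to $R_g(\xi)$ via the standard representation of $SU(N)$ on $\ccc^N$ (recall $p_j$ is, up to sign, $c_{2j}$ of the complexification, and $E\otimes\ccc \cong E\oplus\bar E$), whose Chern classes, after setting $c_1=0$, are precisely the $a_i$. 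I would take this inclusion as given rather than re-derive it.

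Granting the inclusion, the argument is essentially one line. Theorem \ref{genthis} states that $A$ vanishes in every dimension strictly greater than $2N(N-1)(g-1)$, and a subring of a graded ring inherits, degree by degree, the vanishing of the ambient ring; since the Pontryagin ring is a subring of $A$, it too vanishes in all dimensions strictly greater than $2N(N-1)(g-1)$, which is the assertion of the theorem.

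Accordingly, the proof contains no genuine obstacle: the geometric and cohomological content has already been packaged into Theorem \ref{genthis} and into Earl's identification of the Pontryagin classes as polynomials in the $a_i$, so what remains is the elementary observation about graded subrings. The only point meriting a moment's care is that the grading conventions in the two invoked results agree, but since all the classes in play are of even cohomological degree this matching is automatic.
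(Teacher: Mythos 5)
Your proposal is correct and matches the paper's own argument: the paper deduces the statement in exactly this way, citing Earl's result that the Pontryagin ring lies in the subring generated by the $a_i$ and then applying Theorem \ref{genthis}. Nothing further is needed.
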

These results are generalisations of a conjecture of Newstead published in \cite{newstead}. For some other references on this subject, see \cite{ab, bar,bmw, bkvanishing,donaldson, ek2, gieseker, salher,  jkkw, ljjwtoric,jeffrey, lisajeffrey,jk, lk, ljjw,  kiem, kiemli, kn, kirwan2, kirwan,alinaquot,mocounts, meinrenken,     nr, ramanan, seshadri, st,sikora,  thaddeus, witten, yoshida, zaig}. 

We now take $t$ to be a generic element of $T$: 
\begin{defn} \label{generict}
Let $t_1, \ldots, t_N \in (0,1)$ be such that $t_1+\cdots + t_N \in \mathbb{Z}$, but the sum of any proper nonempty subset of the $t_j$ is not an integer, and let $t=\text{Diag}(e^{2\pi i t_1}, \ldots, e^{2\pi i t_N} )\in T$.
\end{defn}  Consider 
\[
R_g(t) = \{(A_1, \ldots, A_g, B_1, \ldots, B_g) \in G^{2g} \vert \prod_{i=1}^g [A_i, B_i] \sim t\}.
\]
Again, $G$ acts on $R_g(t)$ by conjugation; in this case, $S_g(t)=R_g(t)/G$  is a moduli space of rank $N$ vector bundles over $\Sigma^g$ with parabolic structure at the marked point $p$.

Consider the torus bundle $V_g(t) \to S_g(t)$ given by 
\[
\begin{split}
V_g(t) &= \{(A_1, \ldots, A_g, B_1, \ldots, B_g) \in G^{2g} \vert \prod_{i=1}^g [A_i, B_i]=t\} \\&= \{ \rho \in Hom(\pi_1(\Sigma^g\setminus \{p\}),SU(N))\vert \rho(c)=t\};
\end{split}
\]
then $S_g(t)=R_g(t)/G=V_g(t)/T$. 
For $1 \leq i, j \leq N$ with $i \neq j$, let $L_{ij}\to S_g(t)$ be the line bundle associated to $V_g(t)$ by the representation 
\begin{equation} \label{cijaction}
\begin{split}
T \times \ccc &\to \ccc\\
(\text{Diag}(e^{\sqrt{-1}\theta_1}, \ldots, e^{\sqrt{-1}\theta_N}),z)& \mapsto e^{\sqrt{-1}(\theta_i - \theta_j)}z.
\end{split}
\end{equation}
We will denote this representation of $T$ by $\ccc_{(ij)}$, and its weight by $\chi_{ij}$. Note that since $L_{ij} \otimes L_{jk} = L_{ik}$ and $L_{ij} = L_{ji}^*$, for $i,j,k$ all distinct, we have $c_1(L_{ij})+c_1(L_{jk})=c_1(L_{ik})$ and $c_1(L_{ij})=-c_1(L_{ji})$. We are interested in the subring of $H^*(S_g(t);\qqq)$ generated by the $c_1(L_{ij})$. 
When $G=SU(2)$, $V_g(t)$ is a circle bundle, and
\begin{thm}[\cite{jw}] \label{j} $c_1(L_{12})^{2g}=0$.
\end{thm}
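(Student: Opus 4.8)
The plan is to realise $c_1(L_{12})^{2g}$ as the Euler class of the rank-$2g$ complex vector bundle $L_{12}^{\oplus 2g}\to S_g(t)$ and then to write down, out of the matrix entries of the holonomies, a section of this bundle that has \emph{no zeros at all}. Since a real vector bundle admitting a nowhere-vanishing section has vanishing Euler class, and since $e\big(L_{12}^{\oplus 2g}\big)=c_1(L_{12})^{2g}$ — the Euler class is multiplicative over direct sums and agrees with $c_1$ on a complex line bundle — this would give the result at once. The $2g$ hypersurfaces cut out by the individual components of this section are the "simple geometric representatives" of $c_1(L_{12})$ promised in the abstract, and the whole point is that their common intersection is empty, because of the defining relation $\prod_{i=1}^g[A_i,B_i]=t$.

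In more detail, write a point of $V_g(t)$ as $\rho=(A_1,\dots,A_g,B_1,\dots,B_g)\in G^{2g}$ with $\prod_i[A_i,B_i]=t$, and for $1\le i\le g$ let $\sigma_i(\rho)$, respectively $\tau_i(\rho)$, be the $(1,2)$ matrix entry of $A_i$, respectively of $B_i$. Conjugating a matrix by $s=\mathrm{Diag}(e^{\sqrt{-1}\theta},e^{-\sqrt{-1}\theta})\in T$ multiplies its $(1,2)$ entry by $e^{2\sqrt{-1}\theta}$, which is exactly the value of the character $\chi_{12}$ of \eqref{cijaction} on $s$; hence each of $\sigma_i,\tau_i$ is a $T$-equivariant function $V_g(t)\to\ccc_{(12)}$. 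I would then record that the centre $\{\pm I\}$ of $G$ acts trivially both on $V_g(t)$ and on $\ccc_{(12)}$, and that $T/\{\pm I\}\cong S^1$ acts \emph{freely} on $V_g(t)$: every $\rho\in V_g(t)$ has non-abelian image in $SU(2)$ (an abelian image would force $\prod_i[A_i,B_i]=e\ne t$), and the centraliser in $SU(2)$ of a non-abelian subgroup is the centre, so the stabiliser of $\rho$ is $\{\pm I\}$. Thus $V_g(t)\to S_g(t)$ is the circle bundle of the text, and each of $\sigma_i,\tau_i$ descends to a genuine section of $L_{12}=V_g(t)\times_T\ccc_{(12)}$ over $S_g(t)$. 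Geometrically, the zero locus of $\sigma_i$ is the codimension-$2$ cycle $\{[\rho]\in S_g(t):A_i\in T\}$, which after a small perturbation represents $\mathrm{PD}(c_1(L_{12}))$.

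Now assemble $\Sigma=(\sigma_1,\dots,\sigma_g,\tau_1,\dots,\tau_g)$, a section of $L_{12}^{\oplus 2g}$. If $\Sigma([\rho])=0$ then every $A_i$ and every $B_i$ has vanishing $(1,2)$ entry and hence, being an element of $SU(2)$, is diagonal; then all the $A_i,B_i$ commute and $\prod_i[A_i,B_i]=e$, which is impossible since $t\ne e$. Therefore $\Sigma$ is nowhere zero, and $c_1(L_{12})^{2g}=e\big(L_{12}^{\oplus 2g}\big)=0$, which is Theorem \ref{j}.

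The steps that require care — the "main obstacle", such as it is — are bookkeeping rather than conceptual: one must verify that the matrix-entry functions transform by precisely the character $\chi_{12}$ (not $\chi_{21}$ or a nontrivial multiple), so that they are sections of $L_{12}$ itself; one must know that $S_g(t)$ is a smooth compact manifold and $V_g(t)$ an honest principal $S^1$-bundle over it, which follows from the irreducibility of all $\rho\in V_g(t)$ together with regularity of $t$; and one invokes the standard fact that a bundle with a nowhere-vanishing section has zero Euler class. I would stress that, in contrast to the general rank-$N$ statements proved later, this argument never needs the geometric representatives to be mutually transverse: it is enough to check that the single assembled section of $L_{12}^{\oplus 2g}$ is literally nowhere zero, and the surface relation $\prod_i[A_i,B_i]=t\ne e$ makes that immediate.
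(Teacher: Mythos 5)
Your proposal is correct and is essentially the paper's own argument: you use the $(1,2)$ matrix entries of the holonomies as $T$-equivariant functions giving $2g$ sections of $L_{12}$, observe their common zero locus would consist of diagonal tuples forcing $\prod_i[A_i,B_i]=e\neq t$, and conclude via the Euler class of $L_{12}^{\oplus 2g}$ — exactly the mechanism of the paper's sketch and of its Lemma \ref{nowherezero}. No substantive differences to report.
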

We will sketch here the proof of Theorem \ref{j} found in \cite{jw}, as the purpose of this paper is to extend this technique to arbitrary rank. The idea is to find explicit geometric cycles Poincar\'e dual to the Chern class $c_1(V_g(t))$.  For $1 \leq i \leq g$, consider the sections $s_{A_i}$ of $L_{12}\to S_g(t)$ induced by the equivariant maps 
\begin{align*}
V_g(t) &\to \ccc_{(12)} \\
(A_1, \ldots, A_g, B_1, \ldots, B_g) &\mapsto (A_i)_{12},
\end{align*}
where the subscript $12$ denotes the $(1,2)$ matrix entry. These sections $s_{A_i}$ vanish on the cycles 
\[
D(A_i) = \{(A_1, \ldots, A_g, B_1, \ldots, B_g) \in V_g(t) \vert [A_i, t]=1\}/T.
\]
Define sections $s_{B_i}$ and cycles $D(B_i)$ similarly, and consider the intersection
\[
D:=D(A_1) \cap \cdots \cap D(A_g) \cap D(B_1) \cap \cdots \cap D(B_g).
\]
This is the image in $S_g(t)$ of the set of elements $(A_1, \ldots, A_g, B_1, \ldots, B_g) \in V_g(t)$ with the matrices $A_1, \ldots, A_g, B_1, \ldots, B_g$ all diagonal, so $\prod_{i=1}^g[A_i.B_i]=1$. Since $t\neq 1$ by Definition \ref{generict}, $D=\emptyset$, and so $c_1(L_{12})^{2g}=0$.

The purpose of this paper is to generalise this result to $G=SU(N)$. Our theorem is 
\begin{thm}
 \label{mainintro}
For $1 \leq i, j \leq N$ and $i \neq j$, let $k_{ij}$ be nonnegative integers. Then the cohomology class 
\begin{equation*}
{\prod_{\substack{1\leq i,j \leq N \\ i \neq j}}}c_1(L_{ij})^{k_{ij}} \in H^*(S_g(t);\qqq)
\end{equation*} 
vanishes whenever 
\begin{equation*}
\sum_{\substack{1\leq i,j \leq N \\ i \neq j}}k_{ij} \geq N(N-1)g - N + 2.
\end{equation*} 
\end{thm}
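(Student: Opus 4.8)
The plan is to imitate the proof of Theorem \ref{j}: represent the relevant products of Chern classes by intersections of explicit geometric cycles built from matrix entries of the $A_i$ and $B_i$, show those intersections are empty using the genericity of $t$, and then use the additive relations among the $c_1(\lij)$ to reduce an arbitrary exponent pattern to the ``balanced'' patterns that the geometry handles directly.

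First I would set up the cycles. For each ordered pair $i\neq j$ and each of the $2g$ matrices $M\in\{A_1,\dots,A_g,B_1,\dots,B_g\}$, the $T$-equivariant map $V_g(t)\to\cij$, $(A_1,\dots,B_g)\mapsto(M)_{ij}$, is a section of $\lij$ with zero set $\{(M)_{ij}=0\}/T$; since $(M^{-1})_{ij}=\overline{(M)_{ji}}$, the map $(A_1,\dots,B_g)\mapsto(M^{-1})_{ij}$ is a second section of $\lij$, cutting out $\{(M)_{ji}=0\}/T$. Because $t$ is generic all semistable parabolic bundles are stable and $S_g(t)$ is a smooth compact manifold; hence for complex line bundles $E_1,\dots,E_m$ on it one has $c_1(E_1)\cdots c_1(E_m)=e(E_1\oplus\cdots\oplus E_m)$, which vanishes whenever there exist sections $\sigma_\ell$ of $E_\ell$ with $\bigcap_\ell\{\sigma_\ell=0\}=\emptyset$. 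So it suffices, for the given exponents $k_{ij}$, to choose $k_{ij}$ sections of $\lij$ of the above matrix-entry type whose common zero set in $V_g(t)$ is empty; moreover a single factor of $c_1(\lij)$ (or of $c_1(L_{ji})$) may be used to impose the vanishing of $(M)_{ij}$ or of $(M)_{ji}$, for any chosen $M$.

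Next comes the geometric input. If $M\in SU(N)$ has a zero $(a,b)$-entry for every $a$ in a nonempty proper $S_1\subset[N]$ and every $b\notin S_1$, then unitarity forces $M$ to be block diagonal for $S_1\sqcup([N]\setminus S_1)$; in particular, zeroing one row of $M$ ($N-1$ entries) makes $M$ block diagonal for a splitting with $|S_1|=1$. Suppose the chosen sections make $2g-1$ of the matrices diagonal --- each diagonalisation consuming one vanishing entry per unordered position-pair, arranged as the below-diagonal entries for some linear order of $[N]$ --- and make the last matrix, say $A_1$, block diagonal for $\{N\}\sqcup([N]\setminus\{N\})$ by zeroing its $N$-th row (with $B_1$ among the diagonal ones). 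Then $\prod_i[A_i,B_i]=[A_1,B_1]$ is block diagonal with $(N,N)$-entry equal to $1$, whereas the $(N,N)$-entry of $t$ is $e^{2\pi i t_N}\neq1$ by Definition \ref{generict}; so the common zero set misses $V_g(t)$ and that product of Chern classes vanishes. The number of factors used is $(2g-1)\binom N2+(N-1)=N(N-1)g-\binom{N-1}2\le N(N-1)g-N+2$. Thus $\prod_{i\neq j}c_1(\lij)^{k_{ij}}=0$ whenever, for a suitable permutation of $[N]$ and a suitable choice of which $2g-1$ matrices to diagonalise, the exponents are ``balanced'' enough to supply these factors (concretely $k_{ab}+k_{ba}\ge 2g-1$ for every $\{a,b\}$, with $2g$ when one index is the distinguished one); one has the analogous vanishings from coarser splittings $S_1\sqcup S_2$, where each crossing pair must be supplied $2g$ factors and no within-block pair is needed.

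The remaining, and main, step is to pass from these balanced patterns to all patterns of total degree $\ge N(N-1)g-N+2$. Using $c_1(\lij)=c_1(L_{1j})-c_1(L_{1i})$ and $c_1(\lij)=-c_1(L_{ji})$, write $u_j:=c_1(L_{1j})$ with $u_1:=0$; then $c_1(\lij)=u_j-u_i$, the subring under study is the image of $\qqq[u_2,\dots,u_N]$ in $H^*(S_g(t);\qqq)$, and $\prod(u_j-u_i)^{k_{ij}}$ is homogeneous of degree $m:=\sum k_{ij}$ in the $u$'s. For each $r\in\{2,\dots,N\}$ the balanced products attached to the splitting $\{r\}\sqcup([N]\setminus\{r\})$ expand, in the monomial basis $u_2^{a_2}\cdots u_N^{a_N}$, as a nonzero multiple of one monomial of maximal $u_r$-exponent plus monomials of strictly smaller $u_r$-exponent, while always carrying a reserve of $2g$ factors ``about $r$''. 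I would run the resulting triangular systems, one per $r$, to show that, provided $m\ge N(N-1)g-N+2$ --- which is precisely the slack needed to keep a reserve of $2g$ about a pivot while the remaining factors are redistributed to reach an arbitrary monomial --- every degree-$m$ monomial lies in the span of the balanced products, hence vanishes; therefore $\prod(u_j-u_i)^{k_{ij}}$ vanishes, which is the theorem. The hard part is exactly this last bookkeeping: Steps 1--2 are the $SU(N)$ analogue of the known $SU(2)$ argument, but controlling the triangular reduction --- verifying that the reserve can always be maintained, so that the constant $-N+2$ is the exact amount of room the propagation requires --- is where the real combinatorial work lies.
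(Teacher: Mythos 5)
Your geometric step is correct, and in fact slightly sharper than what the paper uses: since $(\rho(u)^{-1})_{ij}=\overline{(\rho(u))_{ji}}$ is also $T$-equivariant of weight $\chi_{ij}$, a single factor $c_1(L_{ij})$ can indeed be spent on zeroing either the $(i,j)$ or the $(j,i)$ entry of any chosen generator, and your configuration ($2g-1$ matrices forced to be diagonal via below-diagonal zeros for a linear order, the last matrix made block diagonal for $\{z\}\sqcup([N]\setminus\{z\})$ by zeroing one row) does force $\prod_i[A_i,B_i]$ to have $(z,z)$-entry $1$, contradicting Definition \ref{generict}. This gives a family of vanishing monomials containing, and somewhat larger than, the paper's block monomials $\prod_{(i,j)\in B}c_1(L_{ij})^{2g}$ of Lemma \ref{vanishing}.

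The gap is in your third step, which is where the actual content of the theorem lies. The statement to be proved is that \emph{every} monomial of total degree at least $N(N-1)g-N+2$ lies, modulo the relations $c_1(L_{ij})=-c_1(L_{ji})$ and $c_1(L_{ij})+c_1(L_{jk})=c_1(L_{ik})$, in the span of the geometrically vanishing products; your ``balanced'' monomials are very special (every unordered pair must carry at least $2g-1$ factors), and a lopsided monomial such as $c_1(L_{12})^m$, with $m\geq N(N-1)g-N+2$, satisfies none of your balancedness conditions, so everything hinges on the reduction you describe only as ``run the resulting triangular systems, one per $r$.'' That is a hope, not an argument: you give no proof that the balanced products are triangular with nonzero leading coefficients in the $u_r$-filtration, no mechanism showing the ``reserve of $2g$'' can be maintained while redistributing factors, and no explanation of how the coarser splittings enter — and they must enter, since the paper's induction genuinely produces blocks $V\times([N]\setminus V)$ of all sizes, not just singletons. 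In the paper this step is Proposition \ref{induction}, proved by induction on $|X|$: the counting Lemma \ref{lines} (via Lemma \ref{somez}) locates an index $z$ such that the part of the monomial avoiding $z$ has degree at least $(n-1)(n-2)g-n+3$, so the inductive hypothesis applies to $X\setminus\{z\}$ and produces block factors $\prod_{(i,j)\in C}x_{ij}^{2g}$; Lemma \ref{forcerestintoline} rewrites the remaining factors in the variables $x_{zj}$ alone; Lemma \ref{partition} shows that on one side of $C$ these have enough degree to apply the inductive hypothesis a second time; and the union of the two resulting blocks contains a block of $X$, which is exactly where the threshold $N(N-1)g-N+2$ is used. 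Without an argument playing this role your proof establishes only the vanishing of your special balanced monomials, not the theorem.
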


\begin{remark} The dimension of $S_g(t)$ is given by
\begin{align*}
\dim (S_g(t)) &= 2g\dim SU(N)-\dim T-\dim SU(N)\\
&= 2g(N^2-1) -(N-1) -(N^2-1)\\
&= 2g(N^2-1) - N^2 -N+2.
\end{align*}
Theorem \ref{mainintro} above says that monomials in the $c_1(L_{ij})$ vanish in degree $r$ for $r \geq 2gN(N-1) -2N +4$, which is well below the dimension of $S_g(t)$.
\end{remark}
The proof uses the same technique as \cite{jw} but the combinatorics of the intersections is much more complicated; we illustrate it here for the case $G=SU(3)$. 

For $1\leq i,j \leq 3$ with $i \neq j$, consider the line bundles $L_{ij} \to S_g(t)$ associated to $V_g(t)$ as above. Their Chern classes $c_{ij}=c_1(L_{ij})$ satisfy 
\begin{equation} \label{relationsthree}
\begin{split}
c_{ij}+c_{jk}+c_{ki} =0\\
c_{ij} + c_{ji} = 0.
\end{split}
\end{equation}
For $1 \leq m \leq g$, we can as before find sections $s_{A_m}^{ij}$ of $L_{ij} \to S_g(t)$ that are zero on 
\[
D_{ij}(A_m) = \{(A_1, \ldots, A_g, B_1, \ldots, B_g) \in V_g(t) \vert (A_m)_{ij}=0\}/T,
\]
and similarly sections $s^{ij}_{B_m}$ with zero locus $D_{ij}(B_m)$. The intersection 
\[
D:=\bigcap_{m=1}^g D_{12}(A_m)\cap D_{12}(B_m) \cap D_{13}(A_m) \cap D_{13}(B_m)
\]
is the image in $S_g(t)$ of the set of elements of $V_g(t)$ where $(A_m)_{12}=(A_m)_{13} = 0$ and $(B_m)_{12}=(B_m)_{13}=0$ for all $m$. Hence $(\prod_{m=1}^g[A_m,B_m])_{11}=1$, so again if $t$ is generic as in Definition \ref{generict}, $D=\emptyset$. Hence the monomial $c_{12}^{2g}c_{13}^{2g}$ vanishes, and similarly, so do $c_{23}^{2g}c_{21}^{2g}$ and $c_{31}^{2g}c_{32}^{2g}$. But any monomial in the $c_{ij}$ of degree at least $6g-1$ may be written using the relations \eqref{relationsthree} as a sum of monomials containing at least one such factor, and thus 
\begin{prop}
Let $G=SU(3)$. Then any monomial in the Chern classes of the $L_{ij}$ of degree at least $6g-1$ vanishes.
\end{prop}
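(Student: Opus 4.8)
The plan is to prove the proposition in two steps: first establish, by the geometric method already sketched above for $SU(2)$, that three particular monomials of degree $4g$ vanish; then show, by an elementary combinatorial argument using only the relations \eqref{relationsthree}, that every monomial of degree at least $6g-1$ is a $\qqq$-linear combination of monomials each divisible by one of the three.

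For the first step I would repeat the construction of the divisors $D_{ij}(A_m)$, $D_{ij}(B_m)$ verbatim from the excerpt. Intersecting the $4g$ divisors $D_{12}(A_m), D_{12}(B_m), D_{13}(A_m), D_{13}(B_m)$ for $1\le m\le g$ produces, as in \cite{jw}, a cycle Poincar\'e dual to $c_{12}^{2g}c_{13}^{2g}$; but on this cycle the first row of each $A_m$ and each $B_m$ has vanishing off-diagonal entries, and unitarity then forces each of these matrices to be block diagonal with a scalar $(1,1)$-block, so $(\prod_m[A_m,B_m])_{11}=1$, contradicting $\prod_m[A_m,B_m]=t$ since $e^{2\pi i t_1}\ne 1$. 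Hence the cycle is empty and $c_{12}^{2g}c_{13}^{2g}=0$. Running the same argument with the second row (the divisors $D_{21}(A_m), D_{21}(B_m), D_{23}(A_m), D_{23}(B_m)$) and with the third row gives $c_{21}^{2g}c_{23}^{2g}=0$ and $c_{31}^{2g}c_{32}^{2g}=0$.

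For the second step I would pass to the two independent generators $x:=c_{12}$ and $y:=c_{13}$. By \eqref{relationsthree}, $c_{21}=-x$, $c_{31}=-y$, $c_{23}=y-x$ and $c_{32}=x-y$, so the subring of $H^*(S_g(t);\qqq)$ generated by the $c_{ij}$ is a quotient of $\qqq[x,y]$ in which, since $2g$ is even, $x^{2g}y^{2g}$, $x^{2g}(x-y)^{2g}$ and $y^{2g}(x-y)^{2g}$ all vanish. It thus suffices to show that each monomial $x^ay^b$ with $a+b\ge 6g-1$ lies in the ideal $J=(x^{2g}y^{2g},\,x^{2g}(x-y)^{2g},\,y^{2g}(x-y)^{2g})$. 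Since $a+b\ge 6g-1>4g-2$ we may assume $a\ge 2g$ (the case $b\ge 2g$ is symmetric, exchanging the roles of the second and third monomials). Then $x^ay^b=x^{2g}\cdot x^{a-2g}y^b$, and expanding $x^{a-2g}=\bigl((x-y)+y\bigr)^{a-2g}$ writes $x^ay^b$ as a sum of terms $x^{2g}(x-y)^ky^{a+b-2g-k}$ with $0\le k\le a-2g$; if $k\ge 2g$ this term is divisible by $x^{2g}(x-y)^{2g}$, and if $k\le 2g-1$ then $a+b-2g-k\ge(6g-1)-2g-(2g-1)=2g$, so the term is divisible by $x^{2g}y^{2g}$. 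In either case the term lies in $J$, so $x^ay^b\in J$, and the proposition follows.

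The genuine content is the emptiness of the intersection in the first step, which is immediate from the genericity of $t$ and is no harder than in \cite{jw}; the combinatorial second step is routine bookkeeping, the only point needing attention being that the threshold $6g-1$ is sharp for the argument, reflecting the elementary fact that every degree-$(4g-1)$ monomial in two linear coordinates is divisible by the $2g$-th power of one of them. I expect the analogous combinatorial step for $SU(N)$, $N\ge 4$---where many more monomials and more intricate linear relations are in play---to be the real obstacle, and this is presumably what the remainder of the paper develops.
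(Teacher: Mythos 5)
Your proposal is correct and follows essentially the same route as the paper: the same three degree-$4g$ monomials ($c_{12}^{2g}c_{13}^{2g}$ and its row analogues) are shown to vanish by intersecting the zero loci of the sections $s^{ij}_{A_m}, s^{ij}_{B_m}$ and invoking the genericity of $t$, and then every monomial of degree at least $6g-1$ is reduced to these using the relations \eqref{relationsthree}. The only difference is that where the paper simply asserts the combinatorial reduction, you make it explicit by passing to the generators $x=c_{12}$, $y=c_{13}$ and expanding $x^{a-2g}=((x-y)+y)^{a-2g}$, which is a correct (and welcome) filling-in of that step.
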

The generalisation of this argument to higher rank requires more careful attention to the combinatorics and algebra of possible monomials.
%%%%%%%%%%%%%%%%%%%%%%%%%%%%%%%%%%%%%%%%%%%%%%%%%%%%%%%%%%%%%%%%%%%%%%%%%%%%%%%%%%%%%%%%%%%%%%%%%%%%%%%%%%%%%%%%%%%%%%%%%%%%%%%%%%%%%%%%%%%%%%%%%%%%%%%%%%%%%%%%%%%%%%%%%%%%%%%%%%%%%%%%%%%%%%%%%%%%%%%%%%%%%%%%%%%%%%%%%%%%%%%%%%%%%%%%%%%%%%%%%%%%%%%%%%%%%%%%%%%%%%%%%%%%%%%%%%%%%%%%%%%%%%%%%%%%%%%%%%%%%%%%%%

\section{Combinatorial Preliminaries} \label{prelims}

We fix an integer $g \geq 2$; in the geometric application this is the genus of the $2$-manifold $\Sigma^g$. 

\begin{notn} For any set $X$, we will denote by $\xxx$ the set of 2-element subsets of $X$. \end{notn}

\begin{lem} \label{lines}
Let $X$ and $Y$ be finite sets with $\vert X \vert = n \geq 3$ and $\vert Y \vert \geq n(n-1)g-n+2$, and let $f:Y\to \xxx$ be a function. Then there exists $z \in X$ such that $\vert f^{-1}((X \setminus \{z\} ) ^{(2)}) \vert \geq (n-1)(n-2)g-n+3$.
\end{lem}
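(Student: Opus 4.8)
The plan is to prove Lemma~\ref{lines} by a pigeonhole/averaging argument over the $n$ choices of the element $z$ to be removed. For each $z \in X$, write $N_z = |f^{-1}((X\setminus\{z\})^{(2)})|$, the number of $y \in Y$ whose image $f(y)$ avoids $z$. We want to show $\max_{z} N_z \ge (n-1)(n-2)g - n + 3$. The natural strategy is to estimate $\sum_{z\in X} N_z$ from below and conclude that some summand is large. Each pair $\{a,b\} \in X^{(2)}$ is contained in $(X\setminus\{z\})^{(2)}$ precisely for those $z$ with $z \notin \{a,b\}$, i.e.\ for exactly $n-2$ values of $z$. Hence every $y \in Y$ is counted in exactly $n-2$ of the sets $f^{-1}((X\setminus\{z\})^{(2)})$, giving the exact identity
\[
\sum_{z \in X} N_z = (n-2)\,|Y| \ge (n-2)\bigl(n(n-1)g - n + 2\bigr).
\]

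**Extracting the bound.** By averaging there is some $z \in X$ with
\[
N_z \ge \frac{(n-2)\bigl(n(n-1)g-n+2\bigr)}{n}.
\]
It remains to check that the right-hand side is at least $(n-1)(n-2)g - n + 3$. Expanding, $(n-2)\bigl(n(n-1)g - n+2\bigr) = n(n-1)(n-2)g - (n-2)(n-2)$, so dividing by $n$ gives $(n-1)(n-2)g - \tfrac{(n-2)^2}{n}$. We must verify
\[
(n-1)(n-2)g - \frac{(n-2)^2}{n} \ \ge\ (n-1)(n-2)g - n + 3,
\]
i.e.\ $\tfrac{(n-2)^2}{n} \le n - 3$, equivalently $(n-2)^2 \le n(n-3) = n^2 - 3n$, i.e.\ $n^2 - 4n + 4 \le n^2 - 3n$, i.e.\ $4 \le n$. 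So for $n \ge 4$ the averaging bound already delivers $N_z \ge (n-1)(n-2)g - n + 3$ after taking the ceiling (the left side is a difference of an integer and something in $[0,1)$ once $n\ge 4$... actually one should be slightly careful and just note $N_z$ is an integer $\ge$ a real number that is $\ge (n-1)(n-2)g-n+3$). The cases $n = 3$ (and the borderline arithmetic) need a separate, direct check: for $n = 3$ we need some $z$ with $N_z \ge 2g$, while $\sum_z N_z = |Y| \ge 6g - 1$ over three choices of $z$, so some $N_z \ge \lceil (6g-1)/3 \rceil = 2g$, which is exactly what is needed.

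**Main obstacle and refinements.** The main subtlety is not the averaging itself but the off-by-one bookkeeping: the hypothesis on $|Y|$ and the desired conclusion are both of the shape "(leading term) $- n + (\text{small constant})$", and the constants $+2$ versus $+3$ are exactly tuned so that the division by $n$ works out — so I expect the real work to be making sure the floor/ceiling estimates are handled correctly rather than discovering the argument. Concretely, after writing $\sum_z N_z \ge (n-2)|Y|$ I would pick $z$ maximizing $N_z$, use $N_z \ge \lceil (n-2)|Y|/n \rceil$, substitute the lower bound for $|Y|$, and then reduce to the elementary inequality $(n-2)^2 \le n(n-3)$ for $n \ge 4$ together with the explicit $n=3$ computation. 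One should also double-check that $f$ is not assumed surjective or injective — it isn't needed; the argument only uses that each fiber element has an image that is a $2$-set, hence avoids exactly $n-2$ elements of $X$. That is the entire content, so I would keep the write-up to essentially the display equations above plus the two-line arithmetic verification.
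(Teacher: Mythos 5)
Your proof is correct and rests on the same double-counting identity $\sum_{z \in X} \vert f^{-1}((X\setminus\{z\})^{(2)})\vert = (n-2)\vert Y\vert$ that drives the paper's argument. The only difference is packaging: the paper argues by contradiction, assuming every fiber count is at most $(n-1)(n-2)g-n+2$ (thereby exploiting integrality term by term) and deducing $\vert Y\vert \le n(n-1)g-n < \vert Y\vert$, which treats all $n\ge 3$ uniformly and avoids your separate $n=3$ case and ceiling bookkeeping.
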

\begin{proof}
For $x \in X$, let $Q_x$ denote the set $(X\setminus \{x\})^{(2)}$. Suppose there is no $z$ for which $\vert f^{-1}(Q_z)\vert \geq (n-1)(n-2)g-n+3$. So for each $x \in X$, $\vert f^{-1}(Q_x)\vert \leq (n-1)(n-2)g-n+2$, and thus \begin{equation*}
\sum_{x \in X}\vert f^{-1}(Q_x)\vert \leq n(n-1)(n-2)g-n(n-2).
\end{equation*} But each element $\{u,v\} \in \xxx$ is contained in exactly $n-2$ of the sets $Q_x$. So $\sum_{x \in X}\vert f^{-1}(Q_x)\vert = (n-2)\vert Y \vert$. Hence \begin{align*}
\vert Y \vert &\leq \frac{n(n-1)(n-2)g - n(n-2)}{n-2} \\ &= n(n-1)g-n \\ &< \vert Y \vert.
\end{align*} This is a contradiction, so such a $z$ must exist. 
\end{proof}

\begin{defn} 
Let $X$ be a finite set. A \emph{block} in $X\times X$ is a subset $B \subset X \times X$ of the form $V \times (X \setminus V)$, where $V \subset X$ is a proper nonempty subset. If $\vert V \vert = h$ we call $B$ an \emph{$h$-by-$(\vert X \vert-h)$ block}.
\end{defn}

\begin{defn} For a finite set $X$, let $\mathcal{B}[X]$ be the set of blocks in $X \times X$. \end{defn}

\begin{notn} If $n \in \mathbb{N}$, we will write $[n]$ for the set $\{1, \ldots, n\}$. \end{notn}

\begin{remark} \label{linalg}
Suppose $A \in SU(N)$ and $1 \leq h \leq N$, let $V \subset [N]$ be a subset with $\vert V \vert = h$, and consider the block $B = V \times ([N] \setminus V) \subset [N] \times [N]$. If $A_{ij}=0$ for all $(i,j)\in B$, then there is some ordering of basis elements for which $A$ is upper block diagonal. More precisely, if $\sigma$ is a permutation of $[N]$ such that $\sigma(V)=[h]$, then the matrix $(C_{ij})=(A_{\sigma^{-1}(i)\sigma^{-1}(j)})$ has the form
$$\left(\begin{array}{ccc|ccc}
A_{\sigma^{-1}(1)\sigma^{-1}(1)} & \cdots & A_{\sigma^{-1}(1)\sigma^{-1}(h)} & 0 & \cdots & 0 \\
\vdots& \ddots& \vdots& \vdots &\ddots & \vdots\\
A_{\sigma^{-1}(N-h)\sigma^{-1}(1)} & \cdots & A_{\sigma^{-1}(N-h)\sigma^{-1}(h)} & 0 & \cdots & 0 \\
\hline
\vert & & \vert & \vert & & \vert \\
u_1 & \cdots & u_h & v_1 & \cdots & v_{N-h}\\
\vert & & \vert & \vert & & \vert \\

\end{array}\right).$$
Since $A$, and hence $C$, are unitary, the vectors $v_i$ form a basis for $\ccc^{N-h}$, and $u_i\cdot v_j=0$ for all $i,j$. Hence each of the $u_i$ must be zero, so the matrix $C$ is block diagonal. Hence $A_{ji}=0$ for all $(i,j)\in B$. Thus the condition $A_{ij}=0$ for all $(i,j) \in B$ implies that $A$ is block diagonal, with blocks of size $h$ and $N-h$, up to reordering of basis elements.
\end{remark}
%%%%%%%%%%%%%%%%%%%%%%%%%%%%%%%%%%%%%%%%%%%%%%%%%%%%%%%%%%%%%%%%%%%%%%%%%%%%%%%%%%%%%%%%%%%%%%%%%%%%%%%%%%%%%%%%%%%%%%%%%%%%%%%%%%%%%%%%%%%%%%%%%%%%%%%%%%%%%%%%%%%%%%%%%%%%%%%%%%%%%%%%%%%%%%%%%%%%%%%%%%%%%%%%%%%%%%%%%%%%%%%%%%%%%%%%%%%%%%%%%%%%%%%%%%%%%%%%%%%%%%%%%%%%%%%%%%%%%%%%%%%%%%%%%%%%%%%%%%%%%%%%%%%%%%%%%%%%%%%%%%%%%%%%%%%%%%%%%%%%%%%%%%%%%%%%%%%%%%%%%%%%%%%%%%%%%%%%%%%%%%%%%

\section{Algebraic Preliminaries} \label{polyring}

Let $X$ be a finite set. 

\begin{defn} \label{cxij} Let $\cxij$ be the ring $\qqq[\{x_{ij}\vert 1\leq i, j\leq \vert X \vert, i \neq j\}]$, where we adjoin variables $\xij$ for all ordered pairs $(i,j)$ with $i,j \in X$ and $i \neq j$.
\end{defn}

We will need the following lemmas. 

\begin{lem} \label{somez} 
Suppose $\vert X \vert = n \geq 3$, and let $p \in \cxij$ be a monomial of degree at least $n(n-1)g-n+2$. Then there exists some $z \in X$ such that if we factor $p$ as $p = qr$, where $q \in \qqq[\{x_{iz}, x_{zi} \vert i \in X \setminus \{z\}\}]$ and $r \in \qqq[\{\xij \vert i, j \in X \setminus \{z\}, i \neq j\}]$ are monomials, then $r$ has degree at least $(n-1)(n-2)g-n+3$. 
\end{lem}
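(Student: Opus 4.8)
The plan is to reduce the statement directly to the combinatorial Lemma~\ref{lines}, by reading off the multiset of variables appearing in the monomial $p$ as a function into $\xxx$. So first I would set up the dictionary between monomials and functions: write $p = \prod_{i\neq j} x_{ij}^{k_{ij}}$ with each $k_{ij}$ a nonnegative integer, and let $Y$ be a finite set with exactly one element for each linear factor of $p$ counted with multiplicity, so that $|Y| = \deg p = \sum_{i \neq j} k_{ij}$. Define $f : Y \to \xxx$ by sending each of the $k_{ij}$ elements of $Y$ associated to the variable $x_{ij}$ to the two-element subset $\{i,j\}$. (The variables $x_{ij}$ and $x_{ji}$ are sent to the same subset, which will not matter below.)

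Next I would invoke Lemma~\ref{lines}. Since $|X| = n \geq 3$ and $|Y| = \deg p \geq n(n-1)g - n + 2$, that lemma produces an element $z \in X$ with
\[
\bigl| f^{-1}\bigl( (X \setminus \{z\})^{(2)} \bigr) \bigr| \geq (n-1)(n-2)g - n + 3.
\]

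Finally I would translate this back into the language of the monomial. An element of $Y$ lies in $f^{-1}\bigl( (X\setminus\{z\})^{(2)} \bigr)$ precisely when its associated variable $x_{ij}$ has both indices in $X \setminus \{z\}$; the product of the corresponding linear factors of $p$ is exactly the factor $r \in \qqq[\{x_{ij} \mid i,j \in X \setminus \{z\},\ i \neq j\}]$ in the unique factorization $p = qr$ of the statement, while the remaining linear factors — each involving an index equal to $z$ — multiply to $q \in \qqq[\{x_{iz}, x_{zi} \mid i \in X \setminus \{z\}\}]$. Hence $\deg r = \bigl| f^{-1}\bigl( (X\setminus\{z\})^{(2)} \bigr) \bigr| \geq (n-1)(n-2)g - n + 3$, as required.

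The combinatorial content all sits in Lemma~\ref{lines}, so I do not expect any genuine obstacle here; the only point demanding a little care is the bookkeeping between the ordered pairs indexing the variables $x_{ij}$ and the unordered pairs forming the codomain $\xxx$ of $f$. Since the splitting $p = qr$ depends only on whether an index of a given variable equals $z$, the pair $x_{iz}, x_{zi}$ can be handled together, and no difficulty arises.
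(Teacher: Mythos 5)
Your proof is correct and matches the paper's argument essentially verbatim: both encode the linear factors of $p$ (with multiplicity) as a set $Y$ mapping to $\xxx$, apply Lemma \ref{lines} to obtain $z$, and identify $f^{-1}\bigl((X\setminus\{z\})^{(2)}\bigr)$ with the factors constituting $r$. No issues.
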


\begin{proof}
Write $p =\lambda \prod_{i,j} \xij^{d_{ij}}$. Let $Y_{ij}$ be disjoint sets with $\vert Y_{ij}\vert = d_{ij}$, for each pair $(i,j)$ with $i, j \in X$ and $i\neq j$. Let $Y = \cup Y_{ij}$. We have \begin{equation*}
\vert Y \vert = \sum_{i,j} d_{ij} \geq n(n-1)g-n+2.
\end{equation*} 
Consider the function 
\begin{align*}
f: Y &\to \xxx\end{align*} given by \begin{equation*}f\vert_{Y_{ij}}=\{i,j\}.\end{equation*} Then \begin{equation*} 
\vert f^{-1}(\{i,j\})\vert = d_{ij}+d_{ji}
\end{equation*} for all $\{i,j\} \in \xxx$. By Lemma \ref{lines}, there exists $z \in X$ with \begin{equation*}
\vert f^{-1}((X\setminus \{z\})^\wer)\vert \geq (n-1)(n-2)g-n+3.
\end{equation*} But \begin{align*}\vert f^{-1}((X\setminus \{z\})^\wer)\vert &= \sum_{\{i,j\} \in (X\setminus \{z\})^\wer}d_{ij}+d_{ji} \\ &= \deg(r), \end{align*} and so $\deg(r) \geq (n-1)(n-2)g-n+3$.
\end{proof}

\begin{lem} \label{partition} 
Let $z \in X$ and let $w, h \in \mathbb{N}$ with $w+h=\vert X \vert - 1$. Let $\eta \in \qqq[\{x_{zi} \vert i \in X \setminus \{z\}\}]$ be a monomial of degree at least $\vert X \vert(\vert X \vert-1)g -\vert X \vert+2-2gwh$. Given a partition $X\setminus \{z\} = \{e_1, \ldots, e_h\} \sqcup \{f_1, \ldots, f_w\}$, factor $\eta$ as $\eta = \eta_h \eta_w$, with $\eta_h \in \qqq[\{x_{ze_i} \vert 1 \leq i \leq h\}]$ and $\eta_w \in \qqq[\{ x_{zf_i}\vert 1 \leq i \leq w\}]$. Then either the degree of $\eta_h$ is at least $gh(h+1)-h+1$, or the degree of $\eta_w$ is at least $gw(w+1)-w+1$. 
\end{lem}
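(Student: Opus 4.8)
The plan is to argue by contradiction, adding together the two negated conclusions and comparing with the hypothesis. Write $n = |X|$, so that the standing assumption is $w + h = n - 1$, and recall that $\eta = \eta_h\eta_w$ with $\eta_h$ and $\eta_w$ monomials in disjoint sets of variables, whence $\deg\eta = \deg\eta_h + \deg\eta_w$.

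Suppose neither conclusion holds. Since degrees are integers, this means $\deg\eta_h \le gh(h+1) - h$ and $\deg\eta_w \le gw(w+1) - w$. Adding these gives
\[
\deg\eta \;\le\; g\bigl(h(h+1) + w(w+1)\bigr) - (h+w).
\]
Next I would rewrite the bracketed expression purely in terms of $n$ and $hw$: using $h+w = n-1$,
\[
h(h+1) + w(w+1) \;=\; (h^2+w^2) + (h+w) \;=\; (h+w)^2 - 2hw + (h+w) \;=\; n(n-1) - 2hw,
\]
so that, substituting and using $h + w = n-1$ once more,
\[
\deg\eta \;\le\; n(n-1)g - 2ghw - n + 1.
\]
But the hypothesis on $\eta$ states $\deg\eta \ge n(n-1)g - n + 2 - 2gwh$; combining the two inequalities yields $n(n-1)g - n + 2 - 2gwh \le n(n-1)g - 2ghw - n + 1$, i.e. $2 \le 1$, a contradiction. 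Hence at least one of the asserted degree bounds must hold.

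The argument is entirely elementary arithmetic, so there is no genuine obstacle; the only points requiring care are the bookkeeping conversion of ``fails to be at least $D$'' into ``is at most $D-1$'' for integer degrees, and the algebraic identity $h(h+1)+w(w+1) = n(n-1) - 2hw$, which is precisely what makes the $-2ghw$ terms cancel and leaves the sharp numerical contradiction.
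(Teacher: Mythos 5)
Your proof is correct and amounts to the same elementary degree count as the paper's: the paper assumes $\deg(\eta_h) < gh(h+1)-h+1$ and directly computes $\deg(\eta_w) = \deg(\eta)-\deg(\eta_h) \ge gw(w+1)-w+1$ using $|X| = w+h+1$, which is just the contrapositive form of your summed-negations contradiction with identical arithmetic.
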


\begin{proof}
Suppose $\deg(\eta_h)<gh(h+1)-h+1$. Then
\begin{align*}
\deg(\eta_w) &= \deg(\eta) - \deg(\eta_h)\\
&\geq \vert X \vert(\vert X \vert -1)g - \vert X \vert + 2 -2gwh - gh(h+1)+h\\
&=(w+h+1)(w+h)g-(w+h+1)+2 -2gwh -gh(h+1) + h\\
&=g(w^2 + 2hw +h^2 + w + h -2wh-h^2 -h) -w-h+1 +h\\
&=gw(w+1)-w+1. \qedhere
\end{align*}
\end{proof}

\begin{defn} \label{ringr} Let $I \subset \cxij$ be the ideal generated by $x_{ij}+x_{ji}$ and $x_{ij} + x_{jk} +x_{ki}$, for all triples of distinct elements $i,j,k \in X$. Let $R = \cxij / I$ be the quotient of $\cxij$ by this ideal. 
\end{defn} 
Note that the quotient preserves the grading by degree. If $\zeta \in \cxij$ we will write $[\zeta]$ for its image in $R$. 

\begin{lem} \label{forcerestintoline} Let $\xi \in \cxij$ be a monomial, and let $z \in X$. Then there exists a homogeneous polynomial $\eta \in \qqq[\{x_{zj}\vert j \in X \setminus \{z\}\}]$ of the same degree as $\xi$ such that $[\xi] = [\eta] \in R$. 
\end{lem}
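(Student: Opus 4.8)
The plan is to show that, modulo $I$, every variable $x_{ij}$ rewrites as an explicit \emph{linear} form in the ``star'' variables $x_{zk}$, $k \in X \setminus \{z\}$, and then simply to substitute these forms into the monomial $\xi$, invoking the fact that the projection $\cxij \to R$ is a ring homomorphism.

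First I would extract from the generators of $I$ the identities needed. Since $x_{jz} + x_{zj} \in I$, we have $[x_{jz}] = -[x_{zj}]$ for every $j \in X \setminus \{z\}$. Whenever $i, j, z$ are three distinct elements of $X$, the generator $x_{ij} + x_{jz} + x_{zi}$ lies in $I$, so $[x_{ij}] = -[x_{jz}] - [x_{zi}] = [x_{zj}] - [x_{zi}]$. Hence, setting
\[
\ell_{ij} \;=\; \begin{cases} x_{zj}, & i = z, \\ -\,x_{zi}, & j = z, \\ x_{zj} - x_{zi}, & i, j \in X \setminus \{z\}, \end{cases}
\]
each $\ell_{ij}$ is a homogeneous degree-one element of $\qqq[\{x_{zk} : k \in X \setminus \{z\}\}]$ with $[x_{ij}] = [\ell_{ij}]$ in $R$ for every ordered pair $(i,j)$ of distinct elements of $X$.

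Then, writing $\xi = \prod_{(i,j)} x_{ij}^{d_{ij}}$, I would take $\eta = \prod_{(i,j)} \ell_{ij}^{d_{ij}} \in \qqq[\{x_{zk} : k \in X \setminus \{z\}\}]$. Because each $\ell_{ij}$ has degree $1$, the product $\eta$ is homogeneous of degree $\sum_{(i,j)} d_{ij} = \deg \xi$; and because $\cxij \to R$ is a ring homomorphism, $[\xi] = \prod_{(i,j)} [x_{ij}]^{d_{ij}} = \prod_{(i,j)} [\ell_{ij}]^{d_{ij}} = [\eta]$, which would give the claim. (If $\vert X \vert \le 2$ there are no variables $x_{ij}$ with $i$ and $j$ both distinct from $z$, and the first two lines of the definition of $\ell_{ij}$ already cover every case, so the argument is unchanged.) There is no genuine obstacle here: the only point requiring attention is to confirm that the rewriting of each $x_{ij}$ stays within the span of the star variables and introduces no higher-degree terms, so that $\eta$ has exactly the degree of $\xi$; everything else is formal.
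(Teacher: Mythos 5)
Your proposal is correct and is essentially the paper's own argument: the paper also substitutes $x_{zj}\mapsto x_{zj}$, $x_{iz}\mapsto -x_{zi}$, and $x_{ij}\mapsto -x_{zi}+x_{zj}$ (for $i,j\neq z$) into the monomial, yielding a product of linear forms in the star variables that is homogeneous of the same degree and congruent to $\xi$ modulo $I$. No differences worth noting.
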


\begin{proof}
If $\xi = \lambda \prod_{i,j}\xij^{d_{ij}}$ where $\lambda \in \qqq$, let\begin{equation*}
\eta = \lambda\prod_{i,j \neq z}(-x_{zi}+x_{zj})^{d_{ij}}\prod_{i \neq z}(-1)^{d_{iz}}x_{zi}^{d_{iz}}\prod_{j\neq z}x_{zj}^{d_{zj}}. 
\end{equation*}Then $[\xi]=[\eta]$, and the degree of each term of $\eta$ is equal to the degree of $\xi$.
\end{proof}

\begin{prop} \label{induction}Let $X$ be a finite set with $\vert X \vert \geq 2$. Let $R = \cxij / I$ as before. Let $\zeta \in \cxij$ be a monomial of degree at least $\vert X \vert ( \vert X \vert - 1)g - \vert X \vert + 2$. Then for each block $B \subset X \times X$ we can find a monomial $\psi_B$ in $\cxij$ such that 
\begin{equation*}
[\zeta] = \left[\sum_{B \in \mathcal{B}[X]}\psi_B\prod_{(i,j) \in B}x_{ij}^{2g}\right].
\end{equation*}
\end{prop}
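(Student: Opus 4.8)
The plan is to prove Proposition~\ref{induction} by induction on $|X|$, following the geometric intuition that a high-degree monomial in the $x_{ij}$ must ``concentrate'' enough of its degree on variables involving a single index $z$, and then on one side of a partition of $X\setminus\{z\}$, so that repeated application produces a factor $x_{ij}^{2g}$ cutting out a block.

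\textbf{Base case.} When $|X|=2$, say $X=\{i,j\}$, the degree bound reads $\deg\zeta\ge 2\cdot 1\cdot g-2+2=2g$, and the only blocks are $\{i\}\times\{j\}$ and $\{j\}\times\{i\}$. Since $\zeta$ is a monomial in $x_{ij},x_{ji}$ with $x_{ij}=-x_{ji}$ in $R$, we have $[\zeta]=[\pm x_{ij}^{d_{ij}+d_{ji}}]$ with $d_{ij}+d_{ji}\ge 2g$, so $[\zeta]=[\psi\, x_{ij}^{2g}]$ for a suitable monomial $\psi$; this is of the required form.

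\textbf{Inductive step.} Assume the result for all sets of size $n-1\ge 2$, and let $|X|=n$ with $\deg\zeta\ge n(n-1)g-n+2$. First apply Lemma~\ref{forcerestintoline}: replacing $\zeta$ by a homogeneous $\eta\in\qqq[\{x_{zj}\mid j\in X\setminus\{z\}\}]$ with $[\zeta]=[\eta]$ is not quite what I want directly, because I need to peel off a block, not reduce to a line. Instead I would first apply Lemma~\ref{somez} to obtain $z\in X$ and a factorization $\zeta=qr$, where $q$ involves only the variables $x_{iz},x_{zi}$ ($i\ne z$) and $r$ involves only variables with both indices in $X\setminus\{z\}$, with $\deg r\ge (n-1)(n-2)g-n+3\ge (n-1)(n-2)g-(n-1)+2$. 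By the inductive hypothesis applied to $X\setminus\{z\}$ and the monomial $r$, we can write $[r]=\bigl[\sum_{B'\in\mathcal B[X\setminus\{z\}]}\psi_{B'}\prod_{(i,j)\in B'}x_{ij}^{2g}\bigr]$. Each $B'=V\times((X\setminus\{z\})\setminus V)$ is \emph{not} a block of $X\times X$; but it extends to a block of $X\times X$ in exactly two ways, namely $V\times(X\setminus V)$ and $(V\cup\{z\})\times(X\setminus(V\cup\{z\}))$, differing precisely by whether $z$ is on the source or target side. So the missing entries are $x_{zj}$ for $j\notin V$, respectively $x_{iz}$ for $i\in V$ --- that is, exactly $w=n-1-|V|$, respectively $h=|V|$, extra variables of the form $x_{z*}$ or $x_{*z}$. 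This is where Lemma~\ref{partition} enters: the leftover factor $q$ (after converting $x_{iz}$ to $\pm x_{zi}$ in $R$ and absorbing signs, as in Lemma~\ref{forcerestintoline}) is, up to sign, a monomial in $\{x_{zi}\mid i\in X\setminus\{z\}\}$ whose degree is $\deg\zeta-\deg r$; one checks $\deg q\le n(n-1)g-n+2-\bigl((n-1)(n-2)g-n+3\bigr)$ still leaves enough, and more to the point, for each $B'$ appearing we need $\deg q$ at least $n(n-1)g-n+2-2gwh$ so that Lemma~\ref{partition} gives either $\deg\eta_h\ge gh(h+1)-h+1$ or $\deg\eta_w\ge gw(w+1)-w+1$. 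In the first case we can supply the $2g$ copies of each of the $h$ variables $x_{zi}$, $i\in V$, needed to upgrade $B'$ to $(V\cup\{z\})\times(X\setminus(V\cup\{z\}))$ (using $h(h+1)/2\ge$ the right count, and converting $x_{zi}$ to $x_{iz}$ via $x_{zi}=-x_{iz}$); in the second case we supply the $2g$ copies of each of the $w$ variables $x_{zj}$, $j\notin V$, needed to upgrade $B'$ to $V\times(X\setminus V)$. Either way $[\psi_{B'}\, q\,\prod_{(i,j)\in B'}x_{ij}^{2g}]$ becomes $[\psi_B\prod_{(i,j)\in B}x_{ij}^{2g}]$ for a genuine block $B$ of $X$, and summing over $B'$ completes the step.

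\textbf{The main obstacle.} The delicate point is bookkeeping the degree inequalities so that Lemma~\ref{partition}'s hypothesis $\deg q \ge |X|(|X|-1)g-|X|+2-2gwh$ is actually met for \emph{every} block $B'$ that shows up in the inductive expansion of $[r]$ --- the values $w,h$ vary with $B'$, and one must check that whatever degree was ``used up'' passing to $r$ still leaves $q$ large enough relative to the particular $w,h$. Concretely, $\deg q = \deg\zeta-\deg r$ and we have only the lower bound $\deg r\ge (n-1)(n-2)g-n+3$; but $r$ might have much larger degree, making $q$ small. The resolution is that we get to \emph{choose} how much of $\zeta$ to put into $r$ versus $q$ --- or rather, we should run Lemma~\ref{somez} and then, if $r$ is ``too big,'' move excess variables (those with both indices in $X\setminus\{z\}$ beyond what a minimal $r$ needs) is not allowed since $q$ cannot contain them; so instead one argues that the inductive hypothesis only needs $\deg r\ge (n-1)(n-2)g-(n-1)+2$ and applies it to a sub-monomial $r_0\mid r$ of exactly that degree, leaving the rest of $r$ together with $q$ --- but again $q$'s variable set is fixed. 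The cleanest fix, which I expect the authors use, is to prove a slightly stronger inductive statement tracking a lower bound on $\deg q$ as well, or to observe that the two ways of extending $B'$ give $w+h=n-1$ fixed, so $2gwh\le 2g\lfloor (n-1)^2/4\rfloor$ and $n(n-1)g-n+2-2gwh$ is bounded below independently of $B'$, reducing everything to a single numerical inequality between that quantity and $\deg\zeta-\deg r\ge$ (the available budget). Verifying that one inequality --- and confirming $h(h+1)/2$-type counts really do provide $2g$ copies of each needed variable --- is the computational heart of the argument.
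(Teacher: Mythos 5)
Your setup (the base case, Lemma \ref{somez}, and one application of the inductive hypothesis to $r$) matches the paper, but the second half of your argument has a genuine gap in two places. First, the step where you ``supply the $2g$ copies of each of the $h$ (or $w$) variables'' out of $\eta_h$ (or $\eta_w$) is not justified: $\eta_h$ is a single monomial in the variables $x_{ze_1},\ldots,x_{ze_h}$, and a lower bound on its total degree says nothing about the exponent of any individual variable --- all of the degree could sit on one $x_{ze_i}$, in which case you cannot extract $x_{ze_j}^{2g}$ for every $j$. (Your case-matching is also swapped: adding $z$ to the source side of $B'$ requires the entries $x_{zf_j}$, not $x_{e_iz}$.) The real point of the bound $gh(h+1)-h+1$ in Lemma \ref{partition} is not a count of $2g$-copies; it is exactly the inductive threshold $(h+1)hg-(h+1)+2$ for the set $\{e_1,\ldots,e_h,z\}$ of size $h+1$. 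The paper therefore applies the inductive hypothesis a \emph{second} time, to $p_h$ over $\{e_1,\ldots,e_h,z\}$, producing blocks $D$ of that smaller set; the final block of $X$ is then $E\subset C\cup D$ of the form $\{e_{\sigma(1)},\ldots,e_{\sigma(d)}\}\times(X\setminus\{e_{\sigma(1)},\ldots,e_{\sigma(d)}\})$, which in general is neither of the two ``extensions'' of $C$ that your argument aims for.

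Second, the degree bookkeeping you flag as the main obstacle is resolved in the paper by a move you do not make: before invoking Lemma \ref{partition}, apply Lemma \ref{forcerestintoline} to the whole residual monomial $q\theta_C$ (your $q\psi_{B'}$), not just to $q$. Since the ideal $I$ is homogeneous, the inductive representatives preserve total degree, so $q\theta_C\prod_{(i,j)\in C}x_{ij}^{2g}$ still has degree $d\geq n(n-1)g-n+2$; hence each resulting monomial $p$ in the line variables $\{x_{zj}\}$ has degree $d-2gwh\geq n(n-1)g-n+2-2gwh$, which is precisely the hypothesis of Lemma \ref{partition}, no matter how large $\deg r$ was and with $w,h$ varying with $C$. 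No strengthened inductive statement or extra numerical inequality is needed; the difficulty you describe arises only because you froze $\theta_C$ instead of feeding it, together with $q$, back into the line variables.
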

\begin{proof}
By induction on $\vert X \vert$.

If $\vert X \vert = 2$, take $X = \{1,2\}$. The monomials in $\cxij$ of degree at least $2g$ are of the form $\lambda x_{12}^ax_{21}^b$ where $a+b\geq 2g$, $\lambda \in \qqq$. The set $\{(1,2)\}$ is a 1-by-1 block in $X \times X$. Since $[x_{12}]=[-x_{21}]$ in $R$, the class $[\lambda x_{12}^ax_{21}^b]=[\lambda(-1)^bx_{12}^{a+b-2g}x_{12}^{2g}] \in R$ is of the desired form.

Now suppose $\vert X \vert = n \geq 3$. Let $\zeta \in \cxij$ be a monomial of degree $d \geq n(n-1)g-n+2$. By Lemma \ref{somez}, there exists $z \in X$ such that if we factor $\zeta$ as $\zeta = qr$, where $q \in \qqq[\{x_{iz},x_{zi}\vert i \in X\setminus \{z\}\}]$ and $r \in \qqq[\{x_{ij}\vert i, j \in X\setminus \{z\}, i \neq j\}]$ are monomials, then the degree of $r$ is at least $(n-1)(n-2)g-n+3$. 

By the inductive hypothesis, for each block $C \subset (X \setminus \{z\})\times (X \setminus \{z\})$ we can find a monomial $\theta_C \in \qqq[\{x_{ij}\vert i, j \in X\setminus \{z\}, i \neq j\}]$ such that 
\begin{equation*}
[r] = \left[\sum_{C \in \mathcal{B}[X\setminus \{z\}]}\theta_C \prod_{(i,j) \in C}x_{ij}^{2g}\right],
\end{equation*} and so 
\begin{equation*}
[\zeta] = \left[qr\right] = \left[q \sum_{C \in \mathcal{B}[X\setminus \{z\}]}\theta_C \prod_{(i,j) \in C}x_{ij}^{2g}\right].
\end{equation*}
It suffices to show that each nonzero monomial in the sum can be written as a sum of terms having the desired form. For each $C$ with $\theta_C \neq 0$, consider 
\begin{equation*}
\left[q\theta_C \prod_{(i,j)\in C}x_{ij}^{2g}\right].
\end{equation*}
This is a monomial of degree $d\geq n(n-1)g-n+2$. Suppose the block $C$ is given by 
\[
C = \{e_1, \ldots, e_h\} \times \{f_1, \ldots, f_w\},
\]
where $h, w \geq 1$ and $X \setminus \{z\}$ is the disjoint union $X \setminus \{z\} =\{e_1, \ldots, e_h\} \sqcup \{f_1, \ldots, f_w\}$ (so $w+h=n-1$). By Lemma \ref{forcerestintoline}, we can find a homogeneous polynomial $p_1 + \cdots + p_m$, where $p_1, \ldots, p_m$ are monomials in $\qqq[\{x_{zj}\vert j \in X \setminus \{z\}\}]$, such that 
\[
\left[q \theta_C \prod_{(i,j)\in C}x_{ij}^{2g}\right]=\left[(p_1 + \cdots + p_m)\prod_{(i,j)\in C}x_{ij}^{2g}\right].
\] 
Again, it suffices to show that each monomial in the sum can be written as a sum of terms having the desired form, so consider 
\[
\left[p\prod_{(i,j) \in C}x_{ij}^{2g}\right],
\]
where $p \in \{p_1, \ldots, p_m\}$. Note that
\[
\deg(p)= d - 2gwh \geq n(n-1)g-n+2-2gwh.
\]Factor $p$ as $p=p_hp_w$ where $p_h \in \qqq[\{x_{ze_i}\vert 1 \leq i \leq h\}]$ and $p_w \in \qqq[\{x_{zf_j}\vert 1 \leq j \leq w\}]$ are monomials. By Lemma \ref{partition}, either $\deg(p_h) \geq gh(h+1)-h+1$ or $\deg(p_w) \geq gw(w+1)-w+1$; without loss of generality we assume the former. 

By the inductive hypothesis, for each block $D \subset \{e_1, \ldots, e_h, z\}\times \{e_1, \ldots, e_h, z\}$ we can find a monomial $\phi_D \in \qqq[\{x_{ij} \vert i, j \in \{e_1, \ldots, e_h,z\}, i \neq j\}]$ such that 
\[
\left[p_h\right] = \left[\sum_{D \in \mathcal{B}[\{e_1, \ldots, e_h, z\}]}\phi_D \prod_{(i,j) \in D}x_{ij}^{2g}\right],
\]
and so 
\[
\left[p \prod_{(i,j)\in C}x_{ij}^{2g}\right]=\left[ p_w \sum_{D \in \mathcal{B}[\{e_1, \ldots, e_h, z\}]}\phi_D \prod_{(i,j) \in D}x_{ij}^{2g}\prod_{(i,j) \in C}x_{ij}^{2g}\right].
\]
For each $D$, consider the monomial
\[
\left[p_w \phi_D \prod_{(i,j) \in D}x_{ij}^{2g}\prod_{(i,j) \in C}x_{ij}^{2g}\right].
\]Observe that $C \cap D = \emptyset$, and so 
\[
\left[p_w \phi_D \prod_{(i,j) \in D}x_{ij}^{2g}\prod_{(i,j) \in C}x_{ij}^{2g}\right] = \left[p_w \phi_D\prod_{(i,j)\in C \cup D}x_{ij}^{2g}\right].
 \]We may assume 
\[
D = \{e_{\sigma(1)}, \ldots, e_{\sigma(d)}\}\times \{e_{\sigma(d+1)}, \ldots, e_{\sigma(h)}, z\},
\]
for some permutation $\sigma$ of $[h]$ and $1 \leq d \leq h$, since $x_{ij}-x_{ji}\in I$. But
\[
C \cup D =\{e_1, \ldots, e_h\} \times \{f_1, \ldots, f_w\} \cup \{e_{\sigma(1)}, \ldots, e_{\sigma(d)}\} \times \{e_{\sigma(d+1)}, \ldots, e_{\sigma(h)}, z\}
\]
contains
\[
E : = \{e_{\sigma(1)}, \ldots, e_{\sigma(d)}\} \times \{e_{\sigma(d+1)}, \ldots, e_{\sigma(h)}, f_1, \ldots, f_w, z\} \in \mathcal{B}[X].
\]
So
\[
\left[p_w \phi_D \prod_{(i,j) \in D}x_{ij}^{2g}\prod_{(i,j) \in C}x_{ij}^{2g}\right] = \left[ \psi_E \prod_{(i,j) \in E}x_{ij}^{2g}\right]
\] for some $\psi_E \in \cxij$. We have shown that $[\zeta]$ has a representative in $\cxij$ that is a sum of monomials of this form, i.e.
\begin{equation*}
[\zeta] = \left[\sum_{B \in \mathcal{B}[X]}\psi_B \prod_{(i,j) \in B} x_{ij}^{2g}\right],
\end{equation*}
for some monomials $\psi_B$ in $\cxij$.
\end{proof}

%%%%%%%%%%%%%%%%%%%%%%%%%%%%%%%%%%%%%%%%%%%%%%%%%%%%%%%%%%%%%%%%%%%%%%%%%%%%%%%%%%%%%%%%%%%%%%%%%%%%%%%%%%%%%%%%%%%%%%%%%%%%%%%%%%%%%%%%%%%%%%%%%%%%%%%%%%%%%%%%%%%%%%%%%%%%%%%%%%%%%%%%%%%%%%%%%%%%%%%%%%%%%%%%%%%%%%%%%%%%%%%%%%%
\section{Proof of the main theorem}

\begin{defn} \label{sections} Suppose $u \in \{a_1, \ldots, a_g, b_1, \ldots, b_g\}$ is one of the chosen generators of $\pi_1(\Sigma^g\setminus \{p\})$, and define maps 
 \begin{align*}f^{ij}_u : V_g(t) &\to \ccc_{(ij)}\\
 \rho &\mapsto (\rho(u))_{ij}
 \end{align*} for each pair $(i.j)$ with $1 \leq i, j \leq N$ and $i \neq j$. These maps $f_u^{ij}$ are $T$-equivariant since $T$ acts on the matrix entry $(\rho(u))_{ij}$ with weight $\chi_{ij}$. These maps then induce sections 
 \begin{align*}
 s_u^{ij}: S_g(t) & \to V_g(t) \times_T \cij \\
 [\rho] &\mapsto (\rho, f_u^{ij}(\rho))
 \end{align*}
 of the line bundles $L_{ij}$. 
 
Let $D_u^{ij}$ be the image in $S_g(t)$ of the subspace $\{ \rho \in V_g(t) \vert (\rho(u))_{ij}=0\}$. Then the section $s_u^{ij}$ is nonzero on the complement of $D_u^{ij}$. 
\end{defn} 

To prove Theorem \ref{mainintro}, we will show that intersections of certain sets of these subspaces $D_u^{ij}$ are empty, and conclude that the corresponding polynomials in the Chern classes $c_1(L_{ij})$ are zero. This is the same technique that was used in \cite{jw}.

\begin{lem} \label{nowherezero}
Let $M$ be a manifold. Let $\mathcal{L}_i \to M$, $i = 1, \ldots, m$, be complex line bundles with sections $s_i:M\to \mathcal{L}_i$. If these sections have no common zeros, i.e. $s_1^{-1}(0) \cap \cdots \cap s_m^{-1}(0) = \emptyset$, then $c_1(\mathcal{L}_1) \cdots c_1(\mathcal{L}_m) = 0$. 
\end{lem}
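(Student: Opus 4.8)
The plan is to package the hypothesis into a single vector bundle. Set $E = \bigoplus_{i=1}^m \mathcal{L}_i$, a rank-$m$ complex vector bundle over $M$, and consider the section $s = (s_1, \dots, s_m)$ of $E$. A point $x \in M$ satisfies $s(x) = 0$ exactly when $s_i(x) = 0$ for every $i$, i.e.\ exactly when $x \in s_1^{-1}(0) \cap \cdots \cap s_m^{-1}(0)$, which is empty by hypothesis. So $s$ is a \emph{nowhere-vanishing} section of $E$. On the other hand, by the Whitney sum formula the total Chern class of $E$ is $c(E) = \prod_{i=1}^m \bigl(1 + c_1(\mathcal{L}_i)\bigr)$, whose degree-$2m$ component is precisely $c_m(E) = c_1(\mathcal{L}_1)\cdots c_1(\mathcal{L}_m)$. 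Thus it suffices to show $c_m(E) = 0$.

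This is the standard fact that the top Chern class (Euler class) of a complex vector bundle of rank equal to its own rank vanishes once the bundle admits a nowhere-zero section: choosing a Hermitian metric on $E$, the nowhere-zero section $s$ spans a trivial complex line subbundle $\langle s \rangle \cong \underline{\mathbb{C}} \subset E$, and its orthogonal complement yields a splitting $E \cong \underline{\mathbb{C}} \oplus E'$ with $E'$ of rank $m-1$. Then $c(E) = c(\underline{\mathbb{C}})\,c(E') = c(E')$, and since $E'$ has rank $m-1$ its total Chern class has no component in degree $2m$; hence $c_m(E) = 0$, and therefore $c_1(\mathcal{L}_1)\cdots c_1(\mathcal{L}_m) = 0$.

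There is essentially no obstacle here: the argument is purely formal and uses no smoothness, compactness, or transversality (and it works with $\mathbb{Z}$ or $\ccc$ coefficients just as well as $\qqq$). If one preferred to argue directly in terms of the individual sections, the same conclusion follows from the relative cup product: each $c_1(\mathcal{L}_i)$ restricts to $0$ on the open set $U_i := M \setminus s_i^{-1}(0)$ (where $s_i$ trivializes $\mathcal{L}_i$), hence lifts to a class in $H^2(M, U_i)$, and the iterated relative product $H^2(M, U_1) \otimes \cdots \otimes H^2(M, U_m) \to H^{2m}(M, U_1 \cup \cdots \cup U_m)$ lands in $H^{2m}(M, M) = 0$ because $U_1 \cup \cdots \cup U_m = M \setminus \bigcap_i s_i^{-1}(0) = M$. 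Either way the essential point is that the common zero locus being empty forces the product of Euler classes to vanish; I would present the $E = \bigoplus_i \mathcal{L}_i$ version.
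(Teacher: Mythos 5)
Your proposal is correct and follows essentially the same route as the paper: form $E = \mathcal{L}_1 \oplus \cdots \oplus \mathcal{L}_m$, observe that $(s_1,\ldots,s_m)$ is a nowhere-vanishing section, and conclude that the top Chern class $c_m(E) = c_1(\mathcal{L}_1)\cdots c_1(\mathcal{L}_m)$ vanishes. The extra detail you supply (splitting off a trivial line subbundle, and the alternative relative cup product argument) only elaborates what the paper invokes directly via the Euler class.
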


\begin{proof} 
Consider the vector bundle $E:=\mathcal{L}_1 \oplus \cdots \oplus \mathcal{L}_m \to M$ with the section \begin{equation*}\sigma :=(s_1, \cdots , s_m) : M \to E.
\end{equation*} The section $\sigma$ is nowhere zero, so the Euler class $e(E) =0$. Since $c_m(E) = e(E)$, the top Chern class $c_m(E)=c_1(\mathcal{L}_1)\cdots c_1(\mathcal{L}_m)$ is equal to zero.
\end{proof}

\begin{lem} \label{vanishing}
Let $0<h<N$, let $B \subset [N] \times [N]$ be an $h$-by-$N-h$ block, and let 
\begin{equation*}
\zeta = \prod_{(i,j)\in B}c_1(L_{ij})^{2g} \in H^{4gh(N-h)}(S_g(t)).
\end{equation*} Then $\zeta = 0$. 
\end{lem}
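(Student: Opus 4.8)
The plan is to realize $\zeta$ as a product of first Chern classes of line bundles whose associated sections have no common zero, and then invoke Lemma~\ref{nowherezero}. Concretely, fix the block $B = V \times ([N]\setminus V)$ with $|V| = h$. For each pair $(i,j) \in B$ and each of the $2g$ generators $u \in \{a_1,\dots,a_g,b_1,\dots,b_g\}$ of $\pi_1(\Sigma^g \setminus \{p\})$, Definition~\ref{sections} supplies a section $s_u^{ij}$ of $L_{ij}$ whose zero locus is $D_u^{ij} \subset S_g(t)$. This gives a family of $2g \cdot h(N-h) = 4gh(N-h)/2$... more precisely $2g$ sections for each of the $|B| = h(N-h)$ pairs, so $2gh(N-h)$ line bundles in total, and the product of their first Chern classes is exactly $\prod_{(i,j)\in B} c_1(L_{ij})^{2g} = \zeta$. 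So by Lemma~\ref{nowherezero} it suffices to prove that
\[
\bigcap_{(i,j)\in B}\ \bigcap_{m=1}^{g}\ D_{a_m}^{ij} \cap D_{b_m}^{ij} = \emptyset.
\]

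Next I would unwind what a point in this intersection means. Such a point is the image in $S_g(t)$ of some $\rho \in V_g(t)$, i.e. a tuple $(A_1,\dots,A_g,B_1,\dots,B_g) \in SU(N)^{2g}$ with $\prod_{m=1}^g [A_m, B_m] = t$, satisfying $(A_m)_{ij} = (B_m)_{ij} = 0$ for every $(i,j) \in B$ and every $m$. By Remark~\ref{linalg} (the linear-algebra observation that vanishing of all entries in a block $B$ of a unitary matrix forces the matrix to be block diagonal, with blocks supported on $V$ and $[N]\setminus V$, after reordering), each $A_m$ and each $B_m$ is block diagonal with respect to the splitting $\ccc^N = \ccc^V \oplus \ccc^{[N]\setminus V}$. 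Hence each commutator $[A_m,B_m]$ is block diagonal of the same shape, and therefore so is the product $\prod_{m=1}^g[A_m,B_m]$. In particular the $(i,j)$ entry of $\prod_{m=1}^g [A_m,B_m]$ is zero for all $(i,j) \in B$; equivalently, restricting attention to any single coordinate, say the first one if $1 \in V$, we see that $\prod[A_m,B_m]$ restricted to the $\ccc^V$ block is an element of $U(h)$ (in fact of $SU(h)$ up to the determinant bookkeeping), and its off-block entries vanish. But $t = \mathrm{Diag}(e^{2\pi i t_1},\dots,e^{2\pi i t_N})$ is generic in the sense of Definition~\ref{generict}: the sum $\sum_{k\in V} t_k$ over the proper nonempty subset $V$ is not an integer, so $\det(t|_{\ccc^V}) = e^{2\pi i \sum_{k\in V} t_k} \neq 1$. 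On the other hand, the $\ccc^V$-block of $\prod_{m=1}^g [A_m,B_m]$ is a product of commutators in $U(h)$ (each $A_m|_{\ccc^V}, B_m|_{\ccc^V}$ is unitary), hence has determinant $1$. This contradicts $\prod[A_m,B_m] = t$, so the intersection is empty.

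Having established emptiness, Lemma~\ref{nowherezero} applied to the $2gh(N-h)$ line bundles $L_{ij}$ (each appearing with multiplicity $2g$, indexed by the generators $u$) with the sections $s_u^{ij}$ gives $\zeta = \prod_{(i,j)\in B} c_1(L_{ij})^{2g} = 0$, as claimed.

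The main obstacle, and the point that needs care, is the block-diagonalization step: one must check that the vanishing condition $(A_m)_{ij} = 0$ for $(i,j)\in B$ really does force $A_m$ to be block diagonal (not merely block \emph{upper}-triangular), which is precisely the content of Remark~\ref{linalg} and relies essentially on unitarity. A secondary subtlety is the passage from "$\rho$ lies over a point of the intersection" to a well-defined block decomposition: the condition $(\rho(u))_{ij}=0$ is $T$-invariant (since $T$ acts diagonally, it scales but does not mix matrix entries), so it descends to $S_g(t)$ and the argument is independent of the choice of representative $\rho$. Once those two points are in place, the determinant obstruction coming from genericity of $t$ closes the argument immediately.
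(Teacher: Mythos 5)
Your proposal is correct and follows essentially the same route as the paper: reduce to emptiness of the common zero locus via Lemma~\ref{nowherezero}, use Remark~\ref{linalg} to force block diagonality of all $\rho(u)$, and derive a contradiction from the fact that the relevant block of a product of commutators of block-diagonal unitary matrices has determinant $1$ while $\det(t|_{\ccc^V})=e^{2\pi i\sum_{k\in V}t_k}\neq 1$ by Definition~\ref{generict}. The only cosmetic difference is that you work directly with the invariant splitting $\ccc^N=\ccc^V\oplus\ccc^{[N]\setminus V}$ rather than conjugating by a permutation matrix $E$ as the paper does, and your explicit remark on $T$-invariance of the vanishing conditions is a welcome touch.
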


\begin{proof}
Consider the sections $s_u^{ij}:S_g(t)\to L_{ij}$ (as defined in \ref{sections}), for $(i,j)\in B$ and generators $u \in \{a_1, \ldots, a_g, b_1, \ldots, b_g\}$ of $\pi_1(\Sigma^g\setminus \{p\})$. We have $(s_u^{ij})^{-1}(0) = D_u^{ij}$. By Lemma \ref{nowherezero}, it suffices to show that \begin{equation*}
D:=\bigcap_{(i,j)\in B}(D_{a_1}^{ij} \cap \cdots \cap D_{a_g}^{ij} \cap D_{b_1}^{ij} \cap \cdots \cap D_{b_g}^{ij})=\emptyset.
\end{equation*} 
By definition, $D$ is the image in $S_g(t)$ of the set of homomorphisms $\rho \in V_g(t)$ such that the $(i,j)^{th}$ entry of $\rho(u)$ is zero for all $(i,j)\in B$ and all $u \in \{a_1, \ldots, a_g, b_1, \ldots, b_g\}$. Suppose $\rho \in D$. By Remark \ref{linalg}, we can find a permutation $\sigma$ of $[N]$ such that the matrices
\[
(\Phi(u))_{ij}):=(\rho(u))_{\sigma^{-1}(i)\sigma^{-1}(j)},
\]
for $u \in \{a_1, \ldots, a_g, b_1, \ldots, b_g\}$ are all block diagonal with blocks of size $h$ and $N-h$. So $\prod_{i=1}^g[\Phi(a_i),\Phi(b_i)]$ is also block diagonal with blocks of size $h$ and $N-h$, and each block has determinant equal to $1$. Let $E \in SU(N)$ be a product of elementary matrices representing this permutation $\sigma$ of basis elements, so that 
\[
\rho(u) = E^\dagger \Phi(u)E \text{ for all }u\in\{a_1,\ldots, a_g,b_1,\ldots, b_g\}.
\]
In particular, the matrix $EtE^\dagger$ is obtained from $t \in T$ by permuting the diagonal entries. Then 
\begin{align*}
t = \rho(c) &= \prod_{i=1}^g[\rho(a_i), \rho(b_i)] \\
&=\prod_{i=1}^g[E^\dagger \Phi(a_i)E, E^\dagger \Phi(b_i)E]\\
&=\prod_{i=1}^g E^\dagger [\Phi(a_i), \Phi(b_i)]E \\
&=E^\dagger\left( \prod_{i=1}^g [\Phi(a_i),\Phi(b_i)]\right)E.
\end{align*}
Thus $E\rho(c)E^\dagger = EtE^\dagger = \prod_{i=1}^g[\Phi(a_i),\Phi(b_i)]$ is a diagonal matrix where the first $h$ diagonal entries have product equal to $1$. But this is impossible because for $\rho \in V_g(t)$ we chose $t=\rho(c)$ such that no $h$ of its diagonal entries could have product equal to $1$ (see Definition \ref{generict}). Hence the set $D$ of such $\rho$ is empty. 
\end{proof}

\begin{proof}[Proof of Theorem \ref{mainintro}] Let $X = [N]$ and consider the rings $\cxij$ and $R$ as in Definitions \ref{cxij} and \ref{ringr}. Let $J\subset H^*(S_g(t))$ be the subring generated by the $c_1(L_{ij})$ for $1 \leq i,j \leq N$ and $i \neq j$. Since $c_1(L_{ij})=-c_1(L_{ji})$ and $c_1(L_{ij})+c_1(L_{jk})=c_1(L_{ik})$, the map 
\begin{align*}
\pi:R &\twoheadrightarrow J \\
[x_{ij}]&\mapsto c_1(L_{ij})
\end{align*} defines a ring homomorphism. Consider the element
\[
\prod_{\substack{1 \leq i, j \leq N \\ i \neq j}}c_1(L_{ij})^{k_{ij}} \in J.
\] It has a representative 
\[
\left[ \prod_{\substack{1 \leq i, j \leq N \\ i \neq j}}x_{ij}^{k_{ij}} \right]
\] in $R$. Suppose 
\[
\sum_{\substack{1 \leq i,j \leq N \\ i \neq j}}k_{ij} \geq N(N-1)g-N+2.
\]Then by Proposition \ref{induction}, for each block $B \subset [N] \times [N]$ we can find a monomial $\theta_B$ in $\cxij$ such that 
\[
\left[ \prod_{\substack{1 \leq i,j \leq N \\ i \neq j}}x_{ij}^{k_{ij}} \right] = \left[ \sum_{B \in \mathcal{B}[X]}\theta_B \prod_{(i,j) \in B}x_{ij}^{2g}\right].
\]So
\begin{align*}
\prod_{\substack{1 \leq i, j \leq N \\ i \neq j}}c_1(L_{ij})^{k_{ij}} &= \pi \left(\left[ \sum_{B \in \mathcal{B}[X]}\theta_B \prod_{(i,j) \in B}x_{ij}^{2g}\right] \right) \\
&= \sum_{B \in \mathcal{B}[X]}\pi(\theta_B)\prod_{(i,j) \in B}c_1(L_{ij})^{2g},
\end{align*} which vanishes by Lemma \ref{vanishing}.
\end{proof}

\bibliographystyle{plain} 

\bibliography{/home/adina/Dropbox/Maths/Math/refs}

\end{document}